\theoremstyle{plain}
\newtheorem{theorem}{Theorem}[section]
\newtheorem{corollary}[theorem]{Corollary}
\newtheorem{lemma}[theorem]{Lemma}
\newtheorem{proposition}[theorem]{Proposition}
\theoremstyle{definition}
\newtheorem{definition}[theorem]{Definition}
\newtheorem{example}[theorem]{Example}
\newtheorem{remark}[theorem]{Remark}
\numberwithin{equation}{section}
\newcommand{\Z}{{\mathbb Z}}
\newcommand{\R}{{\mathbb R}}
\newcommand{\N}{{\mathbb N}}
\newcommand{\C}{{\mathbb C}}
\newcommand{\bdy}{\partial\Omega}
\newcommand{\Om}{\Omega}
\newcommand{\cOm}{\overline{\Omega}}
\providecommand{\vint}[1]{\mathchoice
          {\mathop{\vrule width 5pt height 3 pt depth -2.5pt
                  \kern -9pt \kern 1pt\intop}\nolimits_{\kern -5pt{#1}}}
          {\mathop{\vrule width 5pt height 3 pt depth -2.6pt
                  \kern -6pt \intop}\nolimits_{\kern -3pt{#1}}}
          {\mathop{\vrule width 5pt height 3 pt depth -2.6pt
                  \kern -6pt \intop}\nolimits_{\kern -3pt{#1}}}
          {\mathop{\vrule width 5pt height 3 pt depth -2.6pt
                  \kern -6pt \intop}\nolimits_{\kern -3pt{#1}}}}
\newcommand{\eps}{\varepsilon}
\newcommand{\loc}{{\mbox{\scriptsize{loc}}}}
\newcommand{\cH}{\mathcal{H}}
\newcommand{\BV}{\mathrm{BV}}
\newcommand{\liploc}{\mathrm{Lip}_{\mathrm{loc}}}
\DeclareMathOperator{\capa}{Cap}
\DeclareMathOperator{\rcapa}{cap}
\DeclareMathOperator{\dist}{dist}
\DeclareMathOperator{\diam}{diam}
\DeclareMathOperator{\Lip}{Lip}
\DeclareMathOperator{\supp}{supp}
\DeclareMathOperator{\Arg}{Arg}
\def\XXint#1#2#3{{\setbox0=\hbox{$#1{#2#3}{\int}$}
\vcenter{\hbox{$#2#3$}}\kern-.5\wd0}}
\begin{document}
\title{Trace theorems for functions of bounded variation in metric spaces 
\thanks{{\bf 2000 Mathematics Subject Classification (2010)}: Primary 26A45; Secondary 30L99, 30E05.
\hfill \break {\it Keywords\,}: BV functions, bounded variation, jump sets, traces, Whitney cover, discrete convolution,
capacitary inequality.}
}
\author{P.~Lahti\thanks{Department of Mathematics and Systems Analysis, P.O. Box 11100, FI-00076 Aalto University, Finland. 
E-mail: {\tt panu.lahti@aalto.fi}.} and N.~Shanmugalingam\thanks{Department of Mathematical Sciences, P.O. Box 210025, University of
Cincinnati, Cincinnati, OH 45221--0025, U.S.A. 
E-mail: {\tt shanmun@uc.edu}}}
\maketitle

\begin{abstract} 
In this paper we show existence of traces of functions of bounded variation on the boundary of a certain class of domains in
metric measure spaces equipped with a doubling measure supporting a $1$-Poincar\'e inequality, and obtain
$L^1$ estimates of the trace functions. In contrast with 
the treatment of traces given in other papers on this subject, the traces we consider
do not require knowledge of the function in the exterior of the domain. 
We also establish a Maz'ya-type inequality for functions of bounded
variation that vanish on a set of positive capacity.
\end{abstract}

\section{Introduction}

The Dirichlet problem for functions of least gradient on a domain $\Omega$ is to find a function $u$ that minimizes 
the energy $\int_\Omega |\nabla u|$ amongst the class of all Sobolev functions with 
prescribed boundary values on $\partial\Omega$.
In order to make sense of the problem, one needs to know how to extend the function $u$ (which is a priori defined only on
$\Omega$) to $\partial\Omega$, and one needs to know for which boundary data the problem 
is solvable. The focus of the current paper
is to study the issue of how a function in the class $\BV(\Omega)$ can be extended to $\partial\Omega$, 
that is, whether it has
a \emph{trace} on $\partial\Omega$; the second question needs very strong geometric conditions on 
$\Omega$, even in the Euclidean
setting (see for example~\cite{Giu84}), and will not be addressed in the present paper.

In the classical Euclidean setting, a standard result is that if the boundary $\partial\Omega$ can be presented locally as 
a Lipschitz graph, then the trace of a $\BV$ function exists.
Classical treatments of boundary traces of $\BV$ functions can be found in 
e.g.~\cite[Chapter 3]{AFP}, ~\cite[Chapter 2]{Giu84}, and~\cite{AnzGia},
and similar results for Carnot groups are given in \cite{Vit}.

In the setting of general metric measure spaces, where the standard assumptions are a doubling measure and a Poincar\'e 
inequality, traces of BV functions have only been studied recently. In~\cite{HKLL}, results on traces are obtained by 
making rather strong assumptions on the extendability of BV functions from the domain to the whole space. These extendability 
properties are satisfied by \emph{uniform} domains, which are a natural generalization of Euclidean domains with Lipschitz 
boundaries. In~\cite{L}, results on traces are obtained in more general domains but with 
stronger assumptions on the metric space.
In these papers, the trace of a function in $\BV(\Omega)$ is influenced by how the function is extended outside $\Omega$. 
In this paper, we prove the existence of traces of BV functions with fewer assumptions on the domain than is standard, and
without referring to the behavior of the function in the exterior of the domain. In particular, our 
results hold for domains with various types of cusps, in which either uniformity or Poincar\'e inequalities are violated.

There are four main results in this paper, Theorem~\ref{thm:trace theorem}, Theorem~\ref{thm:L1-est},
Theorem~\ref{thm:density of compactly supported functions revisited-VerI}, and Theorem~\ref{thm:mazya type inequality}.
The first two deal with traces of general $\BV$ functions on a certain class of domains, and the last two deal with 
$\BV$ functions with zero trace.

The structure of the paper is as follows. In Section~2 we give the necessary background definitions used throughout the paper.
In Section~3 we propose a notion of traces of BV functions on an open subset
$\Omega$ of $X$, and in the first main theorem of the paper, Theorem~\ref{thm:trace theorem}, we show that
if the restriction of the measure to $\Omega$  is also doubling and supports a $(1,1)$-Poincar\'e inequality, then
every function in $\BV(\Omega)$ has a trace on  $\partial\Omega$, well-defined up to sets of $\mathcal{H}$-measure zero.
In Section~4 the necessity of the hypotheses imposed on $\Omega$ in
Theorem~\ref{thm:trace theorem} is discussed by means of examples, and in 
Corollary~\ref{cor:exceptional-points} we describe how to relax some of these hypotheses.

The goal of Section~5 is to demonstrate that
under additional geometric assumptions on the boundary of $\Omega$ (regularity of the
$\mathcal{H}$-measure on
$\partial\Omega$), the traces of functions in
$\BV(\Omega)$ lie in $L^1(\partial\Omega,\mathcal{H})$. This result is given in Theorem~\ref{thm:L1-est}. To provide
$L^1$ estimates for the trace, we also prove a BV extension property of domains with such a geometric boundary;
this is Proposition~\ref{thm:extension result}. Section~6 is devoted to the study of $\BV$ functions in $\Omega$ with 
zero trace on $\partial\Omega$. In particular, we show that if  $\mathcal{H}(\partial\Omega)$ is finite for an open set 
$\Omega\subset X$, then every function in $\BV(\Omega)$ with zero trace on $\partial\Omega$ can be extended by
zero to $X\setminus\Omega$ without increasing its BV energy (Theorem~\ref{thm:zero extension}), and that such functions
can be approximated in $\BV(X)$ by BV functions with compact support in $\Omega$ 
(Theorem~\ref{thm:density of compactly supported functions revisited-VerI}). In Section~7 we  prove a Maz'ya-type
Sobolev inequality for functions that vanish on a set of positive capacity (Theorem~\ref{thm:mazya type inequality}).
To prove Theorem~\ref{thm:density of compactly supported functions revisited-VerI} and to study certain locally Lipschitz
approximations of functions in $\BV(\Omega)$, we use Whitney type decompositions of $\Omega$
and discrete convolutions.
For a more substantial description of these
tools, see also the upcoming paper \cite{LaSh}.\\

\noindent {\bf Acknowledgement:} Part of the research for this paper was conducted during the stay of both authors at the
Institut Mittag-Leffler, Sweden, in the fall of 2013, and during the visit of the first author to the University of Cincinnati
in Spring 2015 and the visit of the second author to the Aalto University in June 2015. The authors wish to thank these institutions for their kind hospitality. The research of the second author
is partially supported by the NSF grant DMS\#1200915. The authors also wish to thank Juha Kinnunen and Heikki Hakkarainen
for illuminating discussions on trace results while at the Institut Mittag-Leffler.

\section{Preliminaries}\label{sec:preliminaries}
In this section we introduce the necessary definitions and assumptions.

In this paper, $(X,d,\mu)$ is a complete metric space equipped
with a Borel regular outer measure $\mu$.
The measure is assumed to be doubling, meaning that there exists a constant $C_d>0$ such that
\[
0<\mu(B(x,2r))\leq C_d\mu(B(x,r))<\infty
\]
for every ball $B=B(x,r)$ with center $x\in X$ and radius $r>0$. For a ball $B=B(x,r)$ we will, for brevity, sometimes use 
the notation $\tau B=B(x,\tau r)$, for $\tau>0$. Note that in a metric space, a ball does not necessarily have a unique center 
and radius, but whenever we use the above abbreviation we will consider balls whose center and radii have been specified.

By iterating the doubling condition, we obtain that there are constants $C\ge 1$ and $Q>0$ such that 
\begin{equation}\label{eq:definition of Q}
\frac{\mu(B(y,r))}{\mu(B(x,R))}\ge C^{-1}\, \left(\frac{r}{R}\right)^Q
\end{equation}
for every $0<r\le R$ and $y\in B(x,R)$. See~\cite{BB} for a proof of this.

In general, $C\ge 1$ will denote a constant whose value is not
necessarily the same at each occurrence. When we want to specify that a certain constant 
depends on the parameters $a,b, \ldots,$ we write $C=C(a,b,\ldots)$. Unless otherwise specified, all constants only 
depend on the doubling constant $C_d$ and the constants $C_P,\lambda$ associated
with the Poincar\'e inequality defined later.

We recall that a complete metric space equipped with a doubling measure is proper,
that is, closed and bounded sets are compact. Since $X$ is proper, for any open set $\Omega\subset X$
we define e.g. $\textrm{Lip}_{\loc}(\Omega)$ as the space of
functions that are Lipschitz in every $\Omega'\Subset\Omega$.
Here $\Omega'\Subset\Omega$ means that $\Omega'$ is open and that $\overline{\Omega'}$ is a
compact subset of $\Omega$.

For any set $A\subset X$ and $0<R<\infty$, the restricted spherical Hausdorff content
of codimension $1$ is defined as
\[
\mathcal{H}_{R}(A):=\inf\left\{ \sum_{i=1}^{\infty}
  \frac{\mu(B(x_{i},r_{i}))}{r_{i}}:\,A\subset\bigcup_{i=1}^{\infty}B(x_{i},r_{i}),\,r_{i}\le R\right\}.
\]
The codimension $1$ Hausdorff measure of a set $A\subset X$ is
\begin{equation}\label{eq:codimension 1 Hausdorff measure}
  \mathcal{H}(A):=\lim_{R\rightarrow0}\mathcal{H}_{R}(A).
\end{equation}

The (topological) boundary $\partial E$ of a set $E\subset X$ is
defined as usual. The measure theoretic boundary $\partial^{*}E$ is defined as the set of points $x\in X$
for which both $E$ and its complement have positive upper density, i.e.
\[
\limsup_{r\to 0^+}\frac{\mu(B(x,r)\cap E)}{\mu(B(x,r))}>0\quad\;
  \textrm{and}\quad\;\limsup_{r\to 0^+}\frac{\mu(B(x,r)\setminus E)}{\mu(B(x,r))}>0.
\]

A curve is a rectifiable continuous mapping from a compact interval
to $X$, and is usually denoted by the symbol $\gamma$.
A nonnegative Borel function $g$ on $X$ is an upper gradient 
of an extended real-valued function $u$
on $X$ if for all curves $\gamma$ on $X$, we have
\begin{equation}\label{eq:definition of upper gradients}
|u(x)-u(y)|\le \int_\gamma g\,ds
\end{equation}
whenever both $u(x)$ and $u(y)$ are finite, and 
$\int_\gamma g\, ds=\infty $ otherwise.
Here $x$ and $y$ are the end points of $\gamma$. Upper gradients were originally introduced in \cite{HK}.

If $g$ is a nonnegative $\mu$-measurable function on $X$
and \eqref{eq:definition of upper gradients} holds for $1$-almost every curve,
we say that $g$ is a $1$-weak upper gradient of~$u$. 
By saying that \eqref{eq:definition of upper gradients} holds for $1$-almost every curve
we mean that it fails only for a curve family with zero $1$-modulus. 
A family $\Gamma$ of curves is of zero $1$-modulus if there is a 
nonnegative Borel function $\rho\in L^1(X)$ such that 
for all curves $\gamma\in\Gamma$, the curve integral $\int_\gamma \rho\,ds$ is infinite.

We consider the following norm
\[
\Vert u\Vert_{N^{1,1}(X)}:=\Vert u\Vert_{L^1(X)}+\inf_g\Vert g\Vert_{L^1(X)},
\]
with the infimum taken over all upper gradients $g$ of $u$. 
The Newton-Sobolev space is defined as
\[
N^{1,1}(X):=\{u:\,\|u\|_{N^{1,1}(X)}<\infty\}/{\sim},
\]
where the equivalence relation $\sim$ is given by $u\sim v$ if and only if 
\[
\Vert u-v\Vert_{N^{1,1}(X)}=0.
\]
Similarly, we can define $N^{1,1}(\Omega)$ for any open set $\Omega\subset X$. The space of Newton-Sobolev functions 
with zero boundary values is defined as 
\[
N^{1,1}_0(\Omega):=\{v|_{\Omega}:\,v\in N^{1,1}(X)\textrm{ and }v=0\textrm{ in }X\setminus\Omega\},
\]
i.e. it is the subclass of $N^{1,1}(\Omega)$ consisting of those functions that can be zero extended to the whole space as 
Newton-Sobolev functions. For more on Newton-Sobolev spaces, we refer to \cite{S}, \cite{HKST}, or \cite{BB}.

Next we recall the definition and basic properties of functions
of bounded variation on metric spaces, see \cite{M}. 
For $u\in L^1_{\text{loc}}(X)$, we define the total variation of $u$ as
\[
\|Du\|(X):=\inf\Big\{\liminf_{i\to\infty}\int_X g_{u_i}\,d\mu:\, u_i\in \Lip_{\loc}(X),\, u_i\to u\textrm{ in } L^1_{\text{loc}}(X)\Big\},
\]
where each $g_{u_i}$ is an upper gradient of $u_i$.
We say that a function $u\in L^1(X)$ is \emph{of bounded variation}, 
and denote $u\in\BV(X)$, if $\|Du\|(X)<\infty$. 
A measurable set $E\subset X$ is said to be of \emph{finite perimeter} if $\|D\chi_E\|(X)<\infty$.
By replacing $X$ with an open set $\Omega\subset X$ in the definition of the total variation, we can define $\|Du\|(\Omega)$.
The $\BV$ norm is given by
\begin{equation}\label{eq:def of BV norm}
\Vert u\Vert_{\BV(\Omega)}:=\Vert u\Vert_{L^1(\Omega)}+\Vert Du\Vert(\Omega).
\end{equation}
It was shown in~\cite[Theorem~3.4]{M} that for $u\in\BV(X)$, $\Vert Du\Vert$ is the restriction to the class of open sets of a finite Radon measure defined on the
class of all subsets of $X$. This outer measure is obtained from the map $\Omega\mapsto\Vert Du\Vert(\Omega)$ on open sets
$\Omega\subset X$ via the standard Carath\'eodory construction. Thus, 
for an arbitrary set $A\subset X$, 
\[
\|Du\|(A):=\inf\bigl\{\|Du\|(\Omega):\,\Omega\supset A,\,\Omega\subset X
\text{ is open}\bigr\}.
\]
Similarly, if $u\in\BV(\Omega)$, then $\|Du\|(\cdot)$ is a finite Radon measure on $\Omega$.
We also denote the perimeter of a set $E$ in $\Omega$ by
\[
P(E,\Omega):=\|D\chi_E\|(\Omega).
\]
We have the following coarea formula from~\cite[Proposition 4.2]{M}: if $F\subset X$ is a Borel set and 
$u\in \BV(X)$, then
\begin{equation}\label{eq:coarea}
\|Du\|(F)=\int_{-\infty}^{\infty}P(\{u>t\},F)\,dt.
\end{equation}
If $F$ is open, this holds for every $u\in L^1_{\loc}(F)$.

We will assume throughout the paper that $X$ supports the following $(1,1)$-Poincar\'e inequality:
there are constants $C_P>0$ and $\lambda \ge 1$ such that for every
ball $B(x,r)$, for every locally integrable function $u$,
and for every upper gradient $g$ of $u$, we have 
\[
\vint{B(x,r)}|u-u_{B(x,r)}|\, d\mu 
\le C_P r\vint{B(x,\lambda r)}g\,d\mu,
\]
where 
\[
u_{B(x,r)}:=\vint{B(x,r)}u\,d\mu :=\frac 1{\mu(B(x,r))}\int_{B(x,r)}u\,d\mu.
\]
By approximation, we get the following $(1,1)$-Poincar\'e inequality for $\BV$ functions. There exists $C>0$, depending 
only on the doubling constant and the constants in the Poincar\'e inequality, such that for every ball $B(x,r)$ and every 
$u\in L^1_{\loc}(X)$, we have
\begin{equation}\label{eq:poincare for BV}
\vint{B(x,r)}|u-u_{B(x,r)}|\,d\mu
\le Cr\frac{\Vert Du\Vert (B(x,\lambda r))}{\mu(B(x,\lambda r))}.
\end{equation}

Given an open set $\Omega\subset X$, we can consider it as a metric space in its own right, equipped with the metric 
inherited from $X$ and the restriction of $\mu$ to subsets of $\Omega$. This restriction is a Radon measure on 
$\Omega$, see \cite[Lemma~2.3.15]{HKST}. We say that $\mu$ is doubling on $\Omega$ if the restriction of
$\mu$ to subsets of $\Omega$ is doubling, that is, if there is a constant $C\ge 1$ such that 
\[
0<\mu(B(x,2r)\cap\Omega)\le C\mu(B(x,r)\cap\Omega)<\infty
\]
for every $x\in\Omega$ and $r>0$. Similarly we can require $\Omega$ to support a $(1,1)$-Poincar\'e inequality. 

Given a set $E\subset X$ of locally finite perimeter, for $\mathcal H$-a.e. $x\in \partial^*E$ we have
\begin{equation}\label{eq:density of E}
\gamma \le \liminf_{r\to 0} \frac{\mu(E\cap B(x,r))}{\mu(B(x,r))}
   \le \limsup_{r\to 0} \frac{\mu(E\cap B(x,r))}{\mu(B(x,r))}\le 1-\gamma
\end{equation}
where $\gamma \in (0,1/2]$ only depends on the doubling constant and the constants in the Poincar\'e inequality, 
see~\cite[Theorem 5.4]{A2}. 
We denote the collection of all points $x$ that satisfy~\eqref{eq:density of E} by $E_\gamma$.
For a Borel set $F\subset X$ and a set $E\subset X$ of finite perimeter, we know that
\begin{equation}\label{eq:def of theta}
\Vert D\chi_{E}\Vert(F)=\int_{\partial^{*}E\cap F}\theta_E\,d\mathcal H,
\end{equation}
where $\partial^*E$ is the measure-theoretic boundary of $E$ and
$\theta_E:X\to [\alpha,C_d]$, with $\alpha=\alpha(C_d,C_P,\lambda)>0$, see \cite[Theorem 5.3]{A2} 
and \cite[Theorem 4.6]{AMP}.

In the metric setting it is not known, in general, whether the condition $\mathcal H(\partial^*E)<\infty$ for a $\mu$-measurable 
set $E\subset X$ implies that $P(E,X)<\infty$. We say that $X$ supports a \emph{strong relative isoperimetric inequality} 
if this is true, see \cite{KKST} or \cite{KLS} for more on this question.

The jump set of $u\in\BV(X)$ is defined as
\[
S_{u}:=\{x\in X:\, u^{\wedge}(x)<u^{\vee}(x)\},
\]
where $u^{\wedge}(x)$ and $u^{\vee}(x)$ are the lower and upper approximate limits of $u$ defined by
\begin{equation}\label{eq:lower approximate limit}
u^{\wedge}(x):
=\sup\left\{t\in\overline\R:\,\lim_{r\to 0^+}\frac{\mu(B(x,r)\cap\{u<t\})}{\mu(B(x,r))}=0\right\}
\end{equation}
and
\begin{equation}\label{eq:upper approximate limit}
u^{\vee}(x):
=\inf\left\{t\in\overline\R:\,\lim_{r\to 0^+}\frac{\mu(B(x,r)\cap\{u>t\})}{\mu(B(x,r))}=0\right\}.
\end{equation}
We also set 
\begin{equation}\label{eq:pointwise-rep}
\widetilde{u}:=\frac{u^{\wedge}+u^{\vee}}{2}.
\end{equation}

By \cite[Theorem 5.3]{AMP}, the variation measure of a $\BV$ function can be decomposed into the absolutely 
continuous and singular part, and the latter into the Cantor and jump part, as follows. Given an open set 
$\Omega\subset X$ and $u\in\BV(\Omega)$, we have
\begin{equation}\label{eq:decomposition}
\begin{split}
\Vert Du\Vert(\Omega)
&=\Vert Du\Vert^a(\Omega)+\Vert Du\Vert^s(\Omega)\\
&=\Vert Du\Vert^a(\Omega)+\Vert Du\Vert^c(\Omega)+\Vert Du\Vert^j(\Omega)\\
&=\int_{\Omega}a\,d\mu+\Vert Du\Vert^c(\Omega)
   +\int_{\Omega\cap S_u}\int_{u^{\wedge}(x)}^{u^{\vee}(x)}\theta_{\{u>t\}}(x)\,dt\,d\mathcal H(x)
\end{split}
\end{equation}
where $a\in L^1(\Omega)$ is the density of the absolutely continuous part and the functions $\theta_{\{u>t\}}$ 
are as in~\eqref{eq:def of theta}. From this decomposition it also follows that 
$\mathcal{H}$ is a $\sigma$-finite measure on $S_u$.

\section{Traces of $\BV$ functions}

We give the following definition for the boundary trace, or trace for short, of a function defined on an open set.
\begin{definition}
Let $\Omega\subset X$ be an open set and let $u$ be a $\mu$-measurable function on $\Omega$. 
A function $Tu:\partial\Omega\to\R$
is the trace of $u$ if for $\mathcal{H}$-almost every $x\in\partial\Omega$ we have  
\begin{equation}\label{eq:definition of traces}
\lim_{r\to 0^+}\,\vint{\Omega\cap B(x,r)}|u-Tu(x)|\,d\mu= 0.
\end{equation}
\end{definition}

\begin{example}\label{ex:slit disk}
Consider $X=\C=\R^2$, and set
\[
\Omega=B(0,1)\setminus \{z=(x_1,x_2):\,x_1>0,\,x_2=0\},
\]
that is, the slit disk, and let $u(z):=\Arg(z)$. The function $u$ does not have a trace at any 
boundary point in the slit, but we do have
\[
\lim_{r\to 0^+}\,\vint{\Omega\cap B(z,r)}u\,d\mathcal L^2= \pi
\]
for every $z=(x_1,0)$ with $0<x_1<1$. For this reason, it is crucial that we define traces by requiring the stronger 
condition~\eqref{eq:definition of traces}.
\end{example}

We start by showing that for sufficiently regular domains, BV functions can be extended from the domain to its 
closure, which we consider as a metric space in its own right.
In the following, we define $\overline{\mu}$ as the zero extension of $\mu$ from $\Omega$ to $\overline{\Omega}$,
that is, for $A\subset\overline{\Omega}$ we set $\overline{\mu}(A)=\mu(A\cap\Omega)$. 
By~\cite[Lemma 2.3.20]{HKST} we know that $\overline{\mu}$ is a Borel regular outer measure on 
$\overline{\Omega}$. Also, the zero extension of any $\mu$-measurable function on $\Omega$ to 
either $\overline{\Omega}$ or the whole space $X$ is $\mu$-measurable, see \cite[Lemma 2.3.22]{HKST}.

\begin{proposition}\label{prop-vanishingBVonBoundary}
Assume that $\Omega$ is a bounded open set that supports a $(1,1)$-Poincar\'e inequality, and $\mu$ is doubling on $\Omega$. 
Equip the closure $\overline{\Omega}$ with $\overline{\mu}$.
If $u\in \BV(\Omega)$, then the zero 
extension of $u$ to $\overline{\Omega}$, denoted by $\overline{u}$, satisfies 
$\Vert \overline{u}\Vert_{\BV(\overline{\Omega})}=\Vert u\Vert_{\BV(\Omega)}$ and thus 
$\Vert D\overline{u}\Vert(\partial\Omega)=0$.
\end{proposition}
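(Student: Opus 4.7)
The plan is to show $\|\overline{u}\|_{\BV(\overline{\Omega})}=\|u\|_{\BV(\Omega)}$, from which the claim $\|D\overline{u}\|(\partial\Omega)=0$ will fall out by additivity of the Radon measure $\|D\overline{u}\|$. The $L^{1}$ parts of the norms match automatically because $\overline{\mu}(\partial\Omega)=0$, so everything reduces to proving $\|D\overline{u}\|(\overline{\Omega})=\|Du\|(\Omega)$. Note also that $\overline{\Omega}$ is compact (since $X$ is proper and $\Omega$ is bounded), so $\Lip_{\loc}(\overline{\Omega})=\Lip(\overline{\Omega})$; every function admissible in the definition of $\|D\overline{u}\|(\overline{\Omega})$ is automatically globally Lipschitz on $\overline{\Omega}$.

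For the inequality $\|Du\|(\Omega)\le\|D\overline{u}\|(\overline{\Omega})$, I would take any admissible sequence $v_{i}\in\Lip(\overline{\Omega})$ for $\overline{u}$ with upper gradients $g_{v_{i}}$ and restrict to $\Omega$: every curve in $\Omega$ is also a curve in $\overline{\Omega}$, so $g_{v_{i}}|_{\Omega}$ is an upper gradient of $v_{i}|_{\Omega}\in\Lip(\Omega)\subset\Lip_{\loc}(\Omega)$, and $v_{i}|_{\Omega}\to u$ in $L^{1}(\Omega)$. Since $\overline{\mu}(\partial\Omega)=0$, the upper-gradient integrals over $\overline{\Omega}$ and $\Omega$ coincide, and taking the infimum gives the inequality.

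The reverse inequality $\|D\overline{u}\|(\overline{\Omega})\le\|Du\|(\Omega)$ is the heart of the matter and is where the hypotheses on $\Omega$ are used. Treating $\Omega$ as a doubling metric measure space supporting a $(1,1)$-Poincar\'e inequality in its own right, the standard density result for $\BV$ functions in such spaces furnishes globally Lipschitz approximations $u_{i}\in\Lip(\Omega)$ satisfying $u_{i}\to u$ in $L^{1}(\Omega)$ and $\int_{\Omega}\lip u_{i}\,d\mu\to\|Du\|(\Omega)$. Each such $u_{i}$ is uniformly continuous on $\Omega$, so it extends uniquely by continuity to a Lipschitz function $\widetilde{u}_{i}\in\Lip(\overline{\Omega})$ with the same Lipschitz constant. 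Its pointwise Lipschitz constant $\lip\widetilde{u}_{i}$, computed in $\overline{\Omega}$, is an upper gradient of $\widetilde{u}_{i}$; because $\Omega$ is open it agrees with $\lip u_{i}$ at each point of $\Omega$. Together with $\overline{\mu}(\partial\Omega)=0$, this gives $\int_{\overline{\Omega}}\lip\widetilde{u}_{i}\,d\overline{\mu}=\int_{\Omega}\lip u_{i}\,d\mu\to\|Du\|(\Omega)$, while $\widetilde{u}_{i}\to\overline{u}$ in $L^{1}(\overline{\Omega})$, yielding the inequality.

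The assertion $\|D\overline{u}\|(\partial\Omega)=0$ then follows: $\|D\overline{u}\|(\Omega)$ (the variation on the open subset $\Omega\subset\overline{\Omega}$) is, by definition, computed from the same $\Lip_{\loc}(\Omega)$ approximations and the same measure as $\|Du\|(\Omega)$, so $\|D\overline{u}\|(\Omega)=\|Du\|(\Omega)$, and additivity of the Radon measure $\|D\overline{u}\|$ on the disjoint decomposition $\overline{\Omega}=\Omega\sqcup\partial\Omega$ forces the boundary mass to vanish. The main obstacle is the density step in the hard direction: upgrading the $\Lip_{\loc}$ approximations built into the definition of $\BV(\Omega)$ to truly Lipschitz ones that extend to $\overline{\Omega}$ without losing energy. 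This is precisely what the doubling-plus-Poincar\'e assumption on $\Omega$ provides; without this structure on $\Omega$, a $\Lip_{\loc}$ approximating sequence could blow up near $\partial\Omega$ and admit no globally Lipschitz variant of matching energy.
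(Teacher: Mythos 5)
Your argument is correct and is essentially the paper's: both proofs hinge on the density of globally Lipschitz functions in the $\BV$ energy of the PI space $\Omega$ (with the same truncation/diagonalization needed to upgrade $L^1_{\loc}$ to $L^1$ convergence), followed by extending the Lipschitz approximants to $\overline{\Omega}$ and exploiting $\overline{\mu}(\partial\Omega)=0$; the easy direction and the final additivity step, which you spell out, are left implicit in the paper. The only divergence is local: you take the continuous extension $\widetilde{u}_i$ and use $\lip\widetilde{u}_i$ as its upper gradient (which requires the density result in the slightly stronger, but standard, form $\int_\Omega\lip u_i\,d\mu\to\Vert Du\Vert(\Omega)$), whereas the paper extends each $u_i$ arbitrarily as a Lipschitz function and invokes \cite[Lemma~5.11]{BS} to see that the zero extension of $g_{u_i}$ to $\partial\Omega$ remains a $1$-weak upper gradient in $\overline{\Omega}$.
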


\begin{proof}
Clearly $\Vert u\Vert_{L^1(\Omega)}=\Vert \overline{u}\Vert_{L^1(\overline{\Omega})}$. Since 
$\Omega$ is a metric space with a doubling measure supporting a $(1,1)$-Poincar\'e inequality, 
we know that Lipschitz functions are dense in $N^{1,1}(\Omega)$, see \cite[Theorem 5.1]{BB}. 
Thus for $u\in\BV(\Omega)$ we have a sequence of Lipschitz functions $u_i$ on $\Omega$ 
with $u_i\to u$ in $L_{\loc}^1(\Omega)$ and
\[
\Vert Du\Vert(\Omega)=\liminf_{i\to \infty}\int_{\Omega}g_{u_i}\,d\mu,
\]
where each $g_{u_i}$ is an upper gradient of $u_i$.
By using the fact that $\Omega$ is bounded and by considering truncations of $u$, we can assume that in fact 
$u_i\to u$ in $L^1(\Omega)$. We can extend every Lipschitz function $u_i$ on $\Omega$ to a Lipschitz function on 
$\overline{\Omega}$, still denoted by $u_i$.
Then $u_i\to \overline{u}$ in $L^1(\overline{\Omega})$, and
\[
\Vert Du\Vert(\Omega)=\liminf_{i\to \infty}\int_{\Omega}g_{u_i}\,d\mu
   =\liminf_{i\to \infty}\int_{\overline{\Omega}}g_{u_i}\,d\overline{\mu}\ge \Vert D\overline{u}\Vert(\overline{\Omega}).
\]
The last inequality follows from the fact that the zero extension of $g_{u_i}$ to $\partial\Omega$ is a $1$-weak upper gradient of $u_i$ in $\overline{\Omega}$, by \cite[Lemma 5.11]{BS}.
Thus we have $\overline{u}\in \BV(\cOm)$ with $\Vert D\overline{u}\Vert(\overline{\Omega})=\Vert Du\Vert(\Omega)$, 
so it follows that $\Vert D\overline{u}\Vert(\bdy)=0$.
\end{proof}

Recall the definition of the codimension 1 Hausdorff measure $\mathcal H$ from \eqref{eq:codimension 1 Hausdorff measure}. 
We denote by $\overline{\mathcal H}$ the codimension 1 Hausdorff measure in the space $\overline{\Omega}$, defined with 
respect to the measure $\overline{\mu}$.

We say that an open set $\Omega$ satisfies a \emph{measure density condition} if there is a constant $c_m>0$ such that
\begin{equation}\label{eq:measure density condition}
\mu(B(x,r)\cap\Omega)\ge c_m\mu(B(x,r))
\end{equation}
for $\mathcal H$-a.e. $x\in\partial\Omega$ and every $r\in (0,\diam(\Omega))$.

\begin{theorem}\label{thm:trace theorem}
Let $\Omega$ be a bounded open set that supports a $(1,1)$-Poincar\'e inequality, and $\mu$ be doubling in 
$\Omega$. Then there is a linear trace operator $T$ on $\BV(\Om)$ such that given $u\in \BV(\Om)$, for 
$\overline{\mathcal H}$-a.e. $x\in\partial\Omega$ we have 
\[
\lim_{r\to 0^+}\,\vint{B(x,r)\cap\Om}|u-Tu(x)|^{Q/(Q-1)}\, d\mu=0.
\]
If $\Omega$ also satisfies the measure density condition \eqref{eq:measure density condition}, the above 
holds for $\mathcal H$-a.e. $x\in\partial\Omega$. 
\end{theorem}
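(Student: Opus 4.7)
The strategy is to lift the problem to the closure $\overline{\Omega}$ viewed as a metric measure space, use the $(1,1)$-Poincar\'e inequality to obtain a Sobolev--Poincar\'e inequality with exponent $Q/(Q-1)$, and extract the trace from a telescoping argument on ball averages. First, by Proposition~\ref{prop-vanishingBVonBoundary} the zero-extension $\overline{u}$ lies in $\BV(\overline{\Omega})$ with $\Vert D\overline{u}\Vert(\overline{\Omega})=\Vert Du\Vert(\Omega)$ and $\Vert D\overline{u}\Vert(\partial\Omega)=0$. Since $\overline{\mu}(\partial\Omega)=0$, averages over $B(x,r)\cap\overline{\Omega}$ with respect to $\overline{\mu}$ coincide with averages over $B(x,r)\cap\Omega$ with respect to $\mu$, so it suffices to work entirely in the complete doubling metric measure space $(\overline{\Omega},d,\overline{\mu})$, which by hypothesis supports a $(1,1)$-Poincar\'e inequality and (with the same exponent $Q$, by \eqref{eq:definition of Q}) satisfies the same dimensional lower bound.

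Iterating the $(1,1)$-Poincar\'e inequality in the standard Hajlasz--Koskela fashion (combined with the truncation method that gives the BV version of \eqref{eq:poincare for BV}) yields the Sobolev--Poincar\'e inequality
\[
\left(\vint{B(x,r)\cap\overline{\Omega}} |\overline{u}-\overline{u}_{B(x,r)\cap\overline{\Omega}}|^{Q/(Q-1)}\,d\overline{\mu}\right)^{(Q-1)/Q}
\le Cr\,\frac{\Vert D\overline{u}\Vert(B(x,\lambda r)\cap\overline{\Omega})}{\overline{\mu}(B(x,\lambda r)\cap\overline{\Omega})}
\]
for every $x\in\overline{\Omega}$ and $r>0$. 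Applying this on dyadic balls $B_i:=B(x,2^{-i}r_0)$ and combining with H\"older and the triangle inequality yields
\[
|\overline{u}_{B_{i+1}\cap\overline{\Omega}}-\overline{u}_{B_i\cap\overline{\Omega}}|
\le C\,2^{-i}r_0\,\frac{\Vert D\overline{u}\Vert(\lambda B_i\cap\overline{\Omega})}{\overline{\mu}(\lambda B_i\cap\overline{\Omega})},
\]
so that the sequence $\overline{u}_{B_i\cap\overline{\Omega}}$ is Cauchy at every point $x$ where the series $\sum_i 2^{-i}r_0\,\Vert D\overline{u}\Vert(\lambda B_i\cap\overline{\Omega})/\overline{\mu}(\lambda B_i\cap\overline{\Omega})$ converges; at every such $x$ I set $Tu(x)$ equal to this limit. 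Linearity of $T$ is immediate from the construction.

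The core technical step—and I expect it to be the main obstacle—is showing that the series above converges at $\overline{\mathcal H}$-a.e. $x$. I plan to handle this via a Vitali-type covering argument yielding a weak-type bound for the fractional maximal function of the finite Radon measure $\Vert D\overline{u}\Vert$ against the restricted codimension-one Hausdorff content $\overline{\mathcal H}_R$, then summing over dyadic scales. Once this is in place, the triangle inequality together with the Sobolev--Poincar\'e bound gives, at each such good $x$,
\[
\left(\vint{B(x,r)\cap\Omega} |u-Tu(x)|^{Q/(Q-1)}\,d\mu\right)^{(Q-1)/Q}
\le Cr\,\frac{\Vert D\overline{u}\Vert(B(x,\lambda r)\cap\overline{\Omega})}{\overline{\mu}(B(x,\lambda r)\cap\overline{\Omega})} + |\overline{u}_{B(x,r)\cap\overline{\Omega}}-Tu(x)|,
\]
whose right-hand side tends to $0$ as $r\to 0^+$: the first term is a tail of the convergent series (doubling bridges dyadic and general radii) and the second vanishes by the definition of $Tu(x)$. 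This establishes the conclusion for $\overline{\mathcal H}$-a.e. $x\in\partial\Omega$. To promote $\overline{\mathcal H}$-a.e. to $\mathcal H$-a.e. under \eqref{eq:measure density condition}, observe that $\overline{\mathcal H}\le\mathcal H$ on subsets of $\partial\Omega$ unconditionally, while the measure density hypothesis forces $\overline{\mu}(B(x,r))\ge c_m\mu(B(x,r))$ at $\mathcal H$-a.e. $x\in\partial\Omega$, giving $\overline{\mathcal H}\ge c_m\mathcal H$ there; hence the two measures have identical null sets on $\partial\Omega$.
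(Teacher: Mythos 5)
Your setup (zero extension, Proposition~\ref{prop-vanishingBVonBoundary}, passing to $(\overline{\Omega},\overline{\mu})$, which is doubling and supports a $(1,1)$-Poincar\'e inequality by \cite[Lemma 7.2.3]{HKST}) matches the paper's, and your reduction of the measure-density case to comparability of $\mathcal H$ and $\overline{\mathcal H}$ on $\partial\Omega$ is fine. The gap is exactly in what you call the core technical step. For a BV function the dyadic telescoping series $\sum_i 2^{-i}r_0\,\Vert D\overline{u}\Vert(\lambda B_i)/\overline{\mu}(\lambda B_i)$ does \emph{not} converge $\overline{\mathcal H}$-a.e.: already for $u=\chi_E$ with $E$ of finite perimeter, the relative isoperimetric inequality forces the terms to be bounded below by a positive constant at every point of $E_\gamma$, and that set carries all of $\Vert D\chi_E\Vert$, hence has positive $\mathcal H$-measure. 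The weak-type bound for the restricted fractional maximal function of $\Vert D\overline{u}\Vert$ against $\overline{\mathcal H}_R$ controls the supremum over scales, not the sum; summing the weak-type estimates over dyadic scales gives a divergent bound. So the Cauchy property of the ball averages cannot be extracted this way. This is not a technicality: it is precisely the phenomenon that separates $\BV$ from $N^{1,1}$, where your telescoping-plus-maximal-function argument would be the standard one.

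What rescues the theorem --- and what your plan never invokes --- is the fact that $\Vert D\overline{u}\Vert(\partial\Omega)=0$, combined with the decomposition \eqref{eq:decomposition}: since the jump part of $\Vert D\overline{u}\Vert$ dominates $\alpha\int_{S_{\overline{u}}}(\overline{u}^{\vee}-\overline{u}^{\wedge})\,d\overline{\mathcal H}$, vanishing of the variation measure on $\partial\Omega$ forces $\overline{\mathcal H}(S_{\overline{u}}\cap\partial\Omega)=0$. Off the jump set one then appeals to the Lebesgue point theorem for BV functions, \cite[Theorem 3.5]{KKST2}, which yields $L^{Q/(Q-1)}$-convergence of the averages to $\overline{u}^{\wedge}(x)$ at $\overline{\mathcal H}$-a.e. $x\notin S_{\overline{u}}$; that theorem is itself proved not by summing a telescoping series but by combining approximate continuity (which identifies the limit and gives convergence in measure) with the Sobolev--Poincar\'e bound at points where the restricted maximal function is finite. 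To complete your proof you must either cite that result or reproduce its proof; in either case the decomposition step identifying the jump set as the obstruction, and its triviality on $\partial\Omega$, is indispensable and missing from your outline.
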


\begin{proof}
By \cite[Lemma 7.2.3]{HKST} we know that $\overline{\Omega}$ equipped with $\overline{\mu}$ also supports a 
$(1,1)$-Poincar\'e inequality, and $\overline{\mu}$ is doubling on $\overline{\Omega}$. By 
Proposition~\ref{prop-vanishingBVonBoundary} we know that
$\Vert D\overline{u}\Vert(\partial\Omega)=0$ and so by the decomposition~\eqref{eq:decomposition}, we also
know that $\overline{\mathcal H}(S_{\overline{u}}\cap\bdy)=0$. On the other hand, by the Lebesgue point 
theorem given in~\cite[Theorem~3.5]{KKST2}, we
have for $\overline{\mathcal H}$-a.e. $x\in\bdy\setminus S_{\overline{u}}$
\[
   \lim_{r\to 0^+}\vint{B(x,r)\cap\Om}|u-\overline{u}^\wedge(x)|^{Q/(Q-1)}\, d\mu
       = \lim_{r\to 0^+}\vint{B(x,r)}|\overline{u}-\overline{u}^\wedge(x)|^{Q/(Q-1)}\, d\overline{\mu}=0.
\]
Thus we set $Tu(x)=\overline{u}^\wedge(x)$. Note that because $\overline{\mu}(\partial\Omega)=0$,
the value of $\overline{u}^\wedge$ at points in $\partial\Omega$ is unaffected by how we extend $u$ to
$\partial\Omega$. 
The linearity of $T$ is immediate. If the measure density condition~\eqref{eq:measure density condition} holds, then
$\mathcal H$ and  $\overline{\mathcal H}$ are comparable for subsets of $\partial\Omega$, and so the result holds 
for $\mathcal H$-a.e. $x\in\partial\Omega$. 
\end{proof}

\section{Some examples}

In this section we consider examples that illustrate the function of each of the hypotheses stated in
Theorem~\ref{thm:trace theorem}.

Theorem~\ref{thm:trace theorem} is stronger than it seems, 
for we do \emph{not} assume that $\mathcal{H}(\partial\Omega)$
is finite. 

\begin{example} 
The von Koch snowflake domain in the plane is a uniform domain, and uniform domains 
equipped with the restriction of the Lebesgue measure have a doubling measure supporting a $(1,1)$-Poincar\'e 
inequality, see e.g. \cite{BS}. On the other hand, the $\mathcal{H}$-measure (that is, the 
one-dimensional Hausdorff measure) of the boundary of the snowflake
domain is infinite, so the existence of $Tu$ at $\mathcal{H}$-almost every boundary point is a strong statement.
However, we do not claim here that the trace $Tu$ is in the class $L^1(\partial\Omega,\mathcal{H})$, 
and indeed this would be too much to hope for.
Since constant functions are in the class $\BV(\Omega)$ whenever
$\Omega$ is a bounded domain, and their traces are also constant functions on $\partial\Omega$, in order to consider
whether traces of $\BV(\Omega)$-functions are in $L^1(\partial\Omega,\mathcal{H})$ it is necessary that 
$\mathcal{H}(\partial\Omega)$ be finite. Observe also that for the snowflake domain,
$\partial^*\Omega=\partial\Omega$ so considering $L^1(\partial^*\Omega,\mathcal{H})$ does not help either. 
\end{example}

We will consider $L^1$ estimates in Section~\ref{L1-estimates}.

\begin{example}
Let $C\subset [0,1]$ 
be the usual $1/3$-Cantor set, and define $\Omega:=(0,1)^2\setminus C\times C$. This is an open set from which 
Sobolev functions can be extended to the whole unit square, as can be seen by the characterization of Sobolev 
functions by means of absolute continuity on almost every line parallel to the (canonical) coordinate axes of 
Euclidean spaces. Thus $\Omega$ supports a $(1,1)$-Poincar\'e inequality, and it is 
clear that the Lebesgue measure is doubling in $\Omega$ and that the measure density 
condition~\eqref{eq:measure density condition} holds. Therefore by Theorem~\ref{thm:trace theorem}, any function 
$u\in\BV(\Omega)$ has a trace at $\mathcal H$-a.e. $x\in\partial\Omega$. However, the Hausdorff dimension of
$C\times C$ is $2 \log(2)/\log(3)>1$, and so $\mathcal H(\partial\Omega)=\infty$. Together with the example discussed 
above, this indicates that in the generality considered in Theorem~\ref{thm:trace theorem}
we do not have $Tu\in L^1(\partial\Omega, \mathcal{H})$.
\end{example}

To see why the assumptions of Theorem~\ref{thm:trace theorem} are necessary at least in some form, we first 
note that without a Poincar\'e 
inequality, $\Omega$ can be chosen to be the slit disk from Example~\ref{ex:slit disk}, where we know that
traces of functions of bounded variation do not exist. On the other hand, consider 
a domain in $\R^2$ with an interior cusp:
\[
 \Omega=\{x=(x_1,x_2)\in\R^2:\, |x|<1\text{ and }|x_2|>x_1^2\text{ when }x_1\ge 0\},
\]
This does not support a $(1,1)$-Poincar\'e inequality, since such an inequality always implies that the space 
(in this case the set $\Omega$) is quasiconvex, that is, every pair of points can be connected by a curve whose 
length is at most a constant times the distance between the two points 
(see e.g.~\cite[Proposition~4.4]{HaKo}). However, boundary traces of $\BV$ functions 
do exist for this domain, as can be deduced from the following result. In the above domain with an
internal cusp, we can take 
the sets $F_{\eps}$ to be small closed balls centered at the origin.

\begin{corollary}\label{cor:exceptional-points}
Let $\Omega$ be an open set. The conclusions of Theorem \ref{thm:trace theorem} are true if for every $\eps>0$ there is a 
closed set $F_{\eps}\subset X$ with
\begin{equation}\label{eq:removing a closed set}
\overline{\cH}(\partial\Omega\cap F_{\eps})<\eps
\end{equation}
such that
$\Om\setminus F_{\eps}$, instead of $\Omega$ itself, satisfies the hypotheses of 
Theorem~\ref{thm:trace theorem} (apart from the last sentence).
\end{corollary}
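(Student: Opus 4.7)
The plan is to apply Theorem~\ref{thm:trace theorem} to each modified domain $\Omega_n := \Omega \setminus F_{1/n}$, $n \in \N$, and then patch together the resulting traces on the portion of $\partial\Omega$ that eventually escapes the exceptional closed sets. For each $n$, the set $\Omega_n$ is bounded and open (since $F_{1/n}$ is closed), and by hypothesis it supports the $(1,1)$-Poincar\'e inequality with $\mu$ doubling on it. Given $u\in\BV(\Omega)$, the restriction $u|_{\Omega_n}$ lies in $\BV(\Omega_n)$ because $\|Du\|(\Omega_n) \le \|Du\|(\Omega) < \infty$, so Theorem~\ref{thm:trace theorem} yields a linear trace $T_n u$ defined $\overline{\mathcal H}_n$-almost everywhere on $\partial\Omega_n$, where $\overline{\mathcal H}_n$ is the codimension-$1$ Hausdorff measure on $\overline{\Omega_n}$ associated to the zero-extension $\overline{\mu}_n$ of $\mu$. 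For any $x \in \partial\Omega \setminus F_{1/n}$, closedness of $F_{1/n}$ gives an $r_x > 0$ with $B(x,r_x) \cap F_{1/n} = \emptyset$, so $B(x,r) \cap \Omega = B(x,r) \cap \Omega_n$ for every $r < r_x$; hence the condition satisfied by $T_n u$ at $x$ is literally the condition required for a trace of $u$ at $x$.

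The main step is to convert the $\overline{\mathcal H}_n$-null exceptional set on $\partial\Omega_n$ into an $\overline{\mathcal H}$-null set on $\partial\Omega$. Since $\overline{\mu}$ and $\overline{\mu}_n$ agree on every ball disjoint from $F_{1/n}$, one has the following comparison: if $A \subset \partial\Omega \setminus F_{1/n}$ is compact and $R < \tfrac14 \dist(A, F_{1/n})$, then any cover of $A$ by balls of radii $\le R$ may be refined so that each ball meets $A$, in which case the centers lie within $R$ of $A$ and the balls automatically avoid $F_{1/n}$. Such balls contribute identically to $\overline{\mathcal H}_R(A)$ and $\overline{\mathcal H}_{n,R}(A)$; sending $R \to 0$ yields $\overline{\mathcal H}(A) = \overline{\mathcal H}_n(A)$. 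An exhaustion of $\partial\Omega \setminus F_{1/n}$ by open neighborhoods $U_m$ with $\dist(U_m, F_{1/n}) > 1/m$ then extends this equality to all Borel subsets of $\partial\Omega \setminus F_{1/n}$.

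With this comparison in hand, let $N_n \subset \partial\Omega_n$ denote the $\overline{\mathcal H}_n$-null set where $T_n u$ fails to satisfy the trace condition, and define
\[
E := \Bigl(\partial\Omega \cap \bigcap_{n \in \N} F_{1/n}\Bigr) \;\cup\; \bigcup_{n \in \N} \bigl(N_n \cap (\partial\Omega \setminus F_{1/n})\bigr).
\]
The first piece has $\overline{\mathcal H}$-measure at most $\overline{\mathcal H}(F_{1/n} \cap \partial\Omega) < 1/n$ for every $n$, hence is null; the second is a countable union of $\overline{\mathcal H}$-null sets by the preceding comparison. Thus $\overline{\mathcal H}(E) = 0$. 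For each $x \in \partial\Omega \setminus E$, choose any $n$ with $x \notin F_{1/n} \cup N_n$ and set $Tu(x) := T_n u(x)$; this definition is independent of the choice of $n$ because the limit condition on the averages over $B(x,r) \cap \Omega$ determines the trace value uniquely. Linearity of $T$ is inherited from the linearity of each $T_n$.

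The main potential obstacle is the comparison between the two codimension-$1$ Hausdorff measures $\overline{\mathcal H}$ and $\overline{\mathcal H}_n$; once they are shown to agree on subsets of $\partial\Omega$ separated from $F_{1/n}$, the rest of the argument is essentially bookkeeping built on top of Theorem~\ref{thm:trace theorem}.
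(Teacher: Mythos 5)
Your argument is correct and follows essentially the same route as the paper's proof: apply Theorem~\ref{thm:trace theorem} to $\Omega\setminus F_{\eps}$, use the closedness of $F_{\eps}$ both to identify $B(x,r)\cap\Omega$ with $B(x,r)\cap(\Omega\setminus F_{\eps})$ for small $r$ and to identify the two codimension-one Hausdorff measures on $\partial\Omega\setminus F_{\eps}$, then let $\eps\to 0$. The only difference is cosmetic: you carry out the measure comparison (which the paper merely asserts) and organize the limit $\eps\to 0$ via an explicit countable exceptional set.
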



\begin{proof}
Fix $\eps>0$. It is easy to check that $\partial\Omega\setminus F_{\eps}\subset \partial(\Omega\setminus F_{\eps})$. 
As earlier, we can define a codimension 1 Hausdorff measure in the closure of $\Omega\setminus F_{\eps}$ with 
respect to the zero extension of $\mu$ from $\Omega\setminus F_{\eps}$, but since $F_{\eps}$ is closed, 
this agrees with $\overline{\mathcal H}$ on $\partial\Omega\setminus F_{\eps}$. Thus by 
Theorem~\ref{thm:trace theorem} we know that for $\overline{\mathcal H}$-a.e. 
$x\in \partial\Omega\setminus F_{\eps}$, there exists $Tu(x)\in\R$ with
\[
\lim_{r\to 0^+}\,\vint{B(x,r)\cap\Om\setminus F_{\eps}}|u-Tu(x)|^{Q/(Q-1)}\, d\mu=0.
\]
But since $F_{\eps}$ is closed, this immediately implies that
\[
\lim_{r\to 0^+}\,\vint{B(x,r)\cap\Om}|u-Tu(x)|^{Q/(Q-1)}\, d\mu=0.
\]
Thus the trace $Tu(x)$ exists outside a subset of $\partial\Omega$ with $\overline{\mathcal H}$-measure at most 
$\eps$, due to \eqref{eq:removing a closed set}. By letting $\eps\to 0$, we get the result.
\end{proof}

In the following example, we consider the distinction between the measures $\mathcal H$ and $\overline{\mathcal H}$ .

\begin{example}\label{ex:exterior cusp counterexample}
Without the measure density condition~\eqref{eq:measure density condition}, the conclusion of 
Theorem~\ref{thm:trace theorem} does not 
necessarily hold for $\mathcal H$-a.e. $x\in\partial\Omega$. As a counterexample, consider the space $X=\R^2$ 
equipped with the Euclidean metric and the weighted Lebesgue measure $d\mu:=w\,d\mathcal{L}^2$ with $w=|x|^{-1}$. 
It can be shown that $\mu$ is doubling and $X$ supports a $(1,1)$-Poincar\'e inequality, 
see~\cite[Corollary 15.34]{HKM} and~\cite[Theorem 4]{JB}. Consider the domain
\begin{equation}\label{eq:definition of exterior cusp}
\Omega:=\left\{x=(x_1,x_2)\in \R^2 :\,0<x_1<1,\,|x_2|< x_1^2\right\}.
\end{equation}
We can check that $\mu$ is doubling on $\Omega$ and that $\Omega$ supports a $(1,1)$-Poincar\'e inequality, see the 
upcoming Example~\ref{ex:cusp supports Poincare inequality}. On the other hand, the measure density 
condition~\eqref{eq:measure density condition} clearly 
does not hold at the origin $(0,0)$, for $\mu(B((0,0),r))= 2\pi r$, whereas $\mu(B((0,0),r)\cap\Omega)\le r^2$. 
For $(0,0)\in\partial\Omega$, if $B$ is a ball
containing $(0,0)$,
then $B((0,0),2r)$ contains this ball, where $r$ is the radius of $B$. Thus by the doubling property of $\mu$, we have
\[
  \frac{\mu(B)}{r}\le 2\frac{\mu(B((0,0),2r))}{2r}\le C \frac{\mu(B)}{r}.
\]
Therefore
\[
\mathcal H(\{(0,0)\})\ge \frac{1}{C}\lim_{r\to 0^+}\frac{\mu(B((0,0),r))}{r}=\frac{1}{C}\lim_{r\to 0^+}\frac{2\pi r}{r}=\frac{2\pi}{C},
\]
whereas
\[
\overline{\mathcal H}(\{(0,0)\})\le\lim_{r\to 0^+}\frac{\mu(B((0,0),r)\cap\Omega)}{r} \le\lim_{r\to 0^+}\frac{r^2}{r}=0.
\]
Now, if we define the function $u:=|x|^{-1/2}$, and denote its local Lipschitz constant by $\Lip u$, we have
\[
\Vert Du\Vert(\Omega)=\int_{\Omega}\Lip u\,d\mu= \frac 12 \int_{\Omega} |x|^{-3/2}\,d\mu\le \int_0^1 r^{-3/2} r^2 r^{-1}\,dr<\infty,
\]
and similarly $\Vert u\Vert_{L^1(\Omega)}<\infty$, so $u\in\BV(\Omega)$. However,  the trace of $u$ does not exist 
at $(0,0)$. So without the measure density condition, we may have traces for $\overline{\mathcal H}$-a.e. but not 
$\mathcal H$-a.e. $x\in\partial\Omega$. Thus by allowing ourselves the flexibility of working with 
either $\mathcal{H}$ or $\overline{\mathcal{H}}$, and by taking Corollary~\ref{cor:exceptional-points} into 
account, we cover a wider class of domains.
\end{example}

Next we show that various exterior cusps do satisfy the assumptions of Theorem \ref{thm:trace theorem}, apart from 
the measure density condition~\eqref{eq:measure density condition}.

\begin{example}\label{ex:cusp supports Poincare inequality}
Take the unweighted space $X=\R^2$, and as in \eqref{eq:definition of exterior cusp}, define the domain with a cusp
\[
\Omega:=\left\{x=(x_1,x_2)\in \R^2\,:\,0<x_1<1,\,|x_2|< x_1^2\right\}.
\] 
To show that $\mu$ is doubling on $\Omega$, it suffices to show the doubling property with respect to
squares centered at points in $\Omega$. For $x=(x_1, x_2)\in\Omega$ and $r>0$, this can be done 
by an explicit computation for two separate cases,
with either $r\ge x_1^2$ or $r<x_1^2$ --- we leave the details to the reader.

Now we focus on showing that 
$\Omega$ supports a $(1,1)$-Poincar\'e inequality. For this, it is enough to show that $\Omega$ supports a Semmes 
family of curves, see \cite{Sem} or \cite{Hei01}. Pick a pair of points $x,y\in\Omega$; here we only consider the case 
$x=(x_1,0)$ and $y=(y_1,0)$. The required condition is that there is a family $\Gamma_{x,y}$ of curves connecting 
$x$ and $y$, and a probability measure $\alpha_{x,y}$ on this family, such that
\begin{equation}\label{eq:semmes definition}
\begin{split}
&\int_{\Gamma_{x,y}}\int_{\gamma}\chi_A(t)\,dt\,d\alpha_{x,y}(\gamma)\\
&\qquad \le  C\int_{A} \frac{d(z,x)}{\mathcal L^2|_{\Omega}(B(x,d(z,x)))}
    +\frac{d(z,y)}{\mathcal L^2|_{\Omega}(B(y,d(z,y)))}\,d\mathcal L^2|_{\Omega}(z)
\end{split}
\end{equation}
for every Borel set $A\subset \Omega$.
Define the Semmes family $\Gamma_{x,y}$ of curves between $x$ and $y$ as follows: for $s\in (-1,1)$,
\begin{equation}\label{eq:defining the semmes family}
\begin{split}
&\gamma_s(t):=
\begin{cases}
(x_1+t,st)\\
(x_1+t,s(x_1+t)^2)\qquad \\
(x_1+t,-s(t+x_1-y_1))
\end{cases}
\end{split}
\end{equation}
for the respective cases 
\begin{align*}
&\begin{cases}
t\ge 0,&\quad y_1-(x_1+t)\ge (x_1+t)^2, \quad (x_1+t,t)\in \overline{\Omega},\\
t\ge 0,&\quad y_1-(x_1+t)\ge (x_1+t)^2, \quad (x_1+t,t)\notin \overline{\Omega},\\
0\le t\le y_1-x_1,&\quad y_1-(x_1+t)< (x_1+t)^2,
\end{cases}
\end{align*}
respectively. Note that the parametrization is not by arc-length, but comparable to it. Essentially, the curves spread 
out from $x$ as a "pencil" of angle $\pi/2$ as long as there is space in $\Omega$, then they become parabolas that 
correspond to the cusp, and finally the curves converge on y as another "pencil" of angle $\pi/2$.

Since the curves $\gamma_s\in\Gamma_{x,y}$ are parametrized by $s$, we can define $\alpha_{x,y}:=\frac 12 \mathcal H^1|_{(-1,1)}$, where $\mathcal H^1$ is the 1-dimensional Hausdorff measure.
Now, if $A\subset \Omega$ is such that every $\gamma_s$ is defined in $A$ according to the first or 
third possible definition given in \eqref{eq:defining the semmes family}, it is a standard result, possible to 
prove by the classical  coarea formula, that \eqref{eq:semmes definition} holds.

On the other hand, if $A\subset \Omega$ is such that every $\gamma_s$ is defined by the second definition in $A$, 
 for every $z\in A$, we have
\begin{equation}\label{eq:Riesz kernel estimate}
\begin{split}
\frac{d(z,x)}{\mathcal L^2(B(x,d(z,x))\cap\Omega)}
&\ge \frac 12 \frac{z_1-x_1}{\int_{x_1}^{x_1+2(z_1-x_1)}2 v^2\,dv}\\
&=   \frac{1}{8} \left(\,\vint{\{(x_1,x_1+2(z_1-x_1))\}} v^2\,dv\right)^{-1}\\
&\ge \frac 18 \frac{1}{z_1^2},
\end{split}
\end{equation}
where the last inequality follows from the fact that $v\mapsto v^2$ is convex.
On the other hand, by the classical coarea formula we have
\[
\int_{-1}^1\int_{\gamma_s}\chi_A(t)\,dt\, d\mathcal H^1(s)\le \int_A \frac{1}{z_1^2}\,d\mathcal L^2.
\]
Thus condition \eqref{eq:semmes definition} holds in this case as well.

Finally, it can be checked that essentially the same definition of the Semmes family works in the more 
general case of the space $\R^n$, $n\in\N$, $n\ge 2$, with a weighted Lebesgue measure 
$d\mu=w\, d\mathcal{L}^n$ with $w=|x|^{\alpha}$, $\alpha\in\R$, and with a polynomial cusp of degree 
$\beta>0$ rather than 2. The above computations run through with small changes at least if we require that
$\alpha+\beta\ge 1$.
\end{example}

\section{$L^1$ estimates for traces}\label{L1-estimates}

In this section we consider a particular condition on the domain $\Omega$ (given below 
in~\eqref{eq:codimension boundary condition}) which, in addition to the assumptions on $\Omega$ given in Section~3, 
is necessary and sufficient for obtaining
$L^1$ estimates for traces. Such estimates are related to extensions of $\BV$ functions ---
recall that a domain $\Omega$ is a BV extension domain if there is a bounded operator
$E:\,\BV(\Omega)\to \BV(X)$
such that $Eu\vert_\Omega=u$.
By \cite[Proposition 6.3]{KKST} we know that if $\Omega$ is an open set with $\mathcal H(\partial\Omega)<\infty$ 
and $E\subset\Omega$ is a $\mu$-measurable set with $P(E,\Omega)<\infty$, then $P(E,X)<\infty$. However, 
here we do not have an extension bound $P(E,X)\le CP(E,\Omega)$.
It was shown by Burago and Maz'ya in~\cite{BuMa} that
a domain $\Omega$ is a weak $\BV$ extension domain (with control only on the total variation of the extension, not the whole $\BV$ norm) if and only if there is a constant $C\ge 1$ 
such that whenever $E\subset\Omega$
is of finite perimeter in $\Omega$ and of sufficiently small diameter, there is a set $F\subset X$ with $F\cap\Omega=E$ such that 
$P(F,X)\le CP(E,\Omega)$. See~\cite[Theorem 3.8]{BM} for a proof of this fact in the metric setting. 
In the following extension result we get a  bound not only on the total variation but on the whole $\BV$ norm.
Some of the assumptions we require are 
somewhat simpler than in~\cite{BS}, where Newtonian functions are extended. 

\begin{remark}\label{rem1}
While we are mostly interested in applying extension results to estimates for traces,
the trace operator $T$ need not exist for a BV extension domain. This is demonstrated by the planar 
slit disk, which is known to be a $\BV$ extension domain by~\cite[Theorem 1.1]{KMS}. Furthermore, 
we can show that every Sobolev $N^{1,1}$-extension domain is a BV extension domain, but not every
BV extension domain is a Sobolev $N^{1,1}$-extension domain, 
as demonstrated again by the slit disk.
\end{remark}

\begin{proposition}\label{thm:extension result}
Assume that $\Omega\subset X$ is a bounded domain that
supports a $(1,1)$-Poincar\'e inequality, that there is a constant $C_{\partial\Omega}>0$ such that
\begin{equation}\label{eq:codimension boundary condition}
\mathcal H(B(x,r)\cap \partial\Omega)\le C_{\partial\Omega}\frac{\mu(B(x,r))}{r}
\end{equation}
for all $r\in(0,2\diam(\Omega))$,
and that there is a constant $c_m>0$ such that the measure density condition~\eqref{eq:measure density condition}
holds for all $x\in\partial\Omega$ and all $r\in(0,\diam(\Omega))$. 
Then there is a constant 
$C_{\Omega}=C_{\Omega}(C_d,C_P,\lambda,C_{\partial\Omega},c_m,\diam(\Omega))$ such that for every 
$u\in\BV(\Omega)$ the zero extension of $u$, denoted by $\widehat{u}$, satisfies
\begin{equation}\label{eq:extension bound}
\Vert \widehat{u}\Vert_{\BV(X)}\le C_{\Omega}\Vert u\Vert_{\BV(\Omega)}.
\end{equation}
\end{proposition}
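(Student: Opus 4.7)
The $L^1$ part is trivial since $\|\widehat{u}\|_{L^1(X)}=\|u\|_{L^1(\Omega)}$, so the task is to control $\|D\widehat{u}\|(X)$. The plan is to reduce, via the coarea formula \eqref{eq:coarea}, to a perimeter extension estimate. Splitting $u=u^+-u^-$ and exploiting linearity of the zero extension, we may take $u\ge 0$; then $\{\widehat{u}>t\}=\{u>t\}\subset\Omega$ for every $t>0$, so
\[
\|D\widehat{u}\|(X)=\int_0^\infty P(\{u>t\},X)\,dt,\qquad \|Du\|(\Omega)=\int_0^\infty P(\{u>t\},\Omega)\,dt.
\]
Thus it suffices to show, for every $\mu$-measurable $E\subset\Omega$ with $P(E,\Omega)<\infty$, the estimate
\[
P(E,X)\le C\bigl[P(E,\Omega)+\mu(E)/\diam(\Omega)\bigr]
\]
with $C=C(C_d,C_P,\lambda,C_{\partial\Omega},c_m)$; integrating in $t$ and using Cavalieri for $\|u\|_{L^1(\Omega)}$ yields \eqref{eq:extension bound}.

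Since $E\subset\Omega$, the function $\chi_E$ vanishes off $\overline{\Omega}$, so its variation measure in $X$ is supported in $\overline{\Omega}$ and $P(E,X)=P(E,\Omega)+\|D\chi_E\|(\partial\Omega)$; by \eqref{eq:def of theta} the boundary term is at most $C_d\,\mathcal H(\partial^*E\cap\partial\Omega)$, with $\partial^*E$ understood in $X$. The geometric crux is therefore
\[
\mathcal H(\partial^*E\cap\partial\Omega)\le C\bigl[P(E,\Omega)+\mu(E)/\diam(\Omega)\bigr].
\]
For $\mathcal H$-a.e.\ $x\in\partial^*E\cap\partial\Omega$, the density estimate \eqref{eq:density of E} forces $\mu(E\cap B(x,r))/\mu(B(x,r))\in[\gamma/2,1-\gamma/2]$ at small scales, and combining this with the measure density \eqref{eq:measure density condition} yields, at a carefully selected radius $r(x)\le\diam(\Omega)$,
\[
\min\bigl\{\mu(E\cap B(x,r)),\,\mu((\Omega\setminus E)\cap B(x,r))\bigr\}\ge\alpha\,\mu(B(x,r)\cap\Omega)
\]
for some $\alpha=\alpha(\gamma,c_m)>0$. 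The relative $(1,1)$-isoperimetric inequality on $\Omega$ (which follows from the Poincar\'e hypothesis via \eqref{eq:poincare for BV} applied to $\chi_E$) then gives $\alpha\,\mu(B(x,r)\cap\Omega)\le C\,r\,P(E,\lambda B(x,r)\cap\Omega)$. Extracting a Vitali $5$-subcover $\{B(x_i,r_i)\}$, invoking \eqref{eq:codimension boundary condition} and \eqref{eq:measure density condition}, and using bounded overlap of the enlarged balls,
\[
\mathcal H(\partial^*E\cap\partial\Omega)\le\sum_i\mathcal H(B(x_i,5r_i)\cap\partial\Omega)\le\frac{C}{c_m}\sum_i\frac{\mu(B(x_i,r_i)\cap\Omega)}{r_i}\le C'\,P(E,\Omega),
\]
while the $\mu(E)/\diam(\Omega)$ term absorbs contributions from cover points where $r(x)$ must be capped at the scale of $\diam(\Omega)$ (equivalently, from applying the argument to $\Omega\setminus E$ when $E$ locally exhausts $\Omega$ at that scale).

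The main obstacle is the density-matching step. The density of $E$ in $X$ lies in $[\gamma,1-\gamma]$, but translating this into a two-sided density of $E$ relative to $B(x,r)\cap\Omega$, uniformly in $x$ and with a constant depending only on the stated parameters, is not automatic when $c_m$ is small compared to $\gamma$: the radius $r(x)$ must be chosen scale-by-scale to guarantee that both $E$ and $\Omega\setminus E$ occupy definite fractions of $B(x,r(x))\cap\Omega$ so the relative isoperimetric inequality applies with a uniform constant. Once this density-matching is secured, the chaining through \eqref{eq:codimension boundary condition}, \eqref{eq:measure density condition}, and the Vitali covering is routine.
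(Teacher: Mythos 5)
Your proposal follows essentially the same route as the paper's proof: reduce via the coarea formula to a perimeter extension bound for sets $E\subset\Omega$, control $\Vert D\chi_E\Vert(\partial\Omega)\le C\,\mathcal H(\partial^*E\cap\partial\Omega)$ via \eqref{eq:def of theta}, and bound that quantity by a stopping-time radius selection (precisely the density-matching you flag, carried out by doubling the radius until the relative density of $E$ in $B(x,2^jr)\cap\Omega$ first drops below $1/2$, which together with \eqref{eq:measure density condition} and doubling pins it in $[\tfrac{\gamma}{2}\tfrac{c_m}{C_d},\tfrac12]$), followed by the relative isoperimetric inequality in $\Omega$, a $5r$-covering, and \eqref{eq:codimension boundary condition}; the paper treats your ``capped radius'' contribution as a separate global case $\mu(E)>\tfrac12\mu(\Omega)$, which is equivalent to your $\mu(E)/\diam(\Omega)$ term. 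The one step you elide is that $P(E,X)<\infty$ must be known before \eqref{eq:def of theta} can be applied to $\widehat{\chi_E}$ on $\partial\Omega$; the paper secures this from $\mathcal H(\partial\Omega)<\infty$ (a consequence of \eqref{eq:codimension boundary condition} and boundedness) together with \cite[Proposition 6.3]{KKST}.
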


\begin{example}
Condition~\eqref{eq:codimension boundary condition} is not needed for $\Omega$ to be  a
$\BV$ extension domain, as demonstrated by the von Koch snowflake domain. However, this 
domain does not satisfy~\eqref{eq:extension bound}. While the boundary of the von Koch snowflake domain has
infinite $1$-dimensional Hausdorff measure, a modification of it will result in a domain
which satisfies $\mathcal H(\partial\Omega)<\infty$ as well as all
the hypotheses of the above theorem except for~\eqref{eq:codimension boundary condition}, and it can be seen
directly that in such a domain, the zero extensions of BV functions are of finite perimeter in 
$X=\R^2$ but do not satisfy~\eqref{eq:extension bound}.
One such domain can be obtained as follows. The curve obtained by the construction of the von Koch snowflake curve
at the $k$-th step has length $(4/3)^k$. For each positive integer $k$ we scale a copy of the $k$-th step of the snowflake
curve by a factor $(3/4)^{2k}$, and concatenate them so that their end points lie on the $x$-axis in $\R^2$. 
Such a curve has total length $\sum_{k=1}^\infty (3/4)^k<\infty$. We replace one side of a square of length
$\sum_{k=1}^\infty (3/4)^{2k}$ with this curve. The interior region in $\R^2$ surrounded by this closed curve
is a uniform domain, and hence the restriction of the Lebesgue measure to this domain is doubling and supports a
$(1,1)$-Poincar\'e inequality. Furthermore, the $\mathcal{H}$-measure of its boundary is
\[
\sum_{k=1}^\infty (3/4)^k+ 3\, \sum_{k=1}^\infty (3/4)^{2k}<\infty.
\]
However, this domain fails to satisfy~\eqref{eq:codimension boundary condition}.

To demonstrate that we cannot discard the other assumptions of Proposition~\ref{thm:extension result} 
either, we note that without the assumption of a Poincar\'e inequality in 
$\Omega$, we could take the space to be
\[
X:=\{(x_1,x_2)\in\R^2:\,0\le x_1\le 1,\,-1\le x_2\le x_1^2\}
\]
and then set $\Omega$ to be the domain
\[
\Omega:=X\setminus [0,1/2]\times\{0\}.
\]
Note that $\Omega$ satisfies the measure density condition and~\eqref{eq:codimension boundary condition}, 
but fails to support a $(1,1)$-Poincar\'e inequality.
If we now consider the sets $E_i:=\{(x_1,x_2)\in X :\, 0\le x_1\le 1/i,\, x_2>0\}$, $i\in\N$, then
$P(E_i,\Omega)=1/i^2$, whereas for every $F_i\subset X$ for which $F_i\cap\Omega=E_i$,
we must have $P(F_i,X)\ge 1/i$. Therefore we cannot find a constant $C\ge 1$ such that
$P(F_i,X)\le C P(E_i,\Omega)$ for all positive integers $i$.

To see that the measure density condition is also needed in the proposition, we consider $X$ to be the Euclidean plane, with
$\Omega=\{(x_1,x_2)\in\R^2 :\, 0<x_1<1,\, |x_2|<x_1^2\}$ the external cusp domain we saw in 
Example~\ref{ex:cusp supports Poincare inequality}. As shown in that example, $\Omega$ supports a 
$(1,1)$-Poincar\'e inequality
and the restriction of the Lebesgue measure to $\Omega$ is doubling, and 
clearly~\eqref{eq:codimension boundary condition} holds as well. However, $\Omega$ does not satisfy the measure
density condition, and the sets $E_i:=\{(x_1,x_2)\in\R^2 :\, 0<x_1<1/i,\, |x_2|< x_1^2\}$,
$i\in\N$, together demonstrate that we cannot
find a bound $C\ge 1$ controlling the perimeter of the extensions.
\end{example}


\begin{proof}[Proof of Proposition~\ref{thm:extension result}]
We first consider a $\mu$-measurable set $E$ such that $P(E,\Omega)<\infty$.
By~\eqref{eq:codimension boundary condition} we have $\mathcal H(\partial\Omega)<\infty$. Therefore
by~\cite[Proposition 6.3]{KKST} we know that $P(E,X)<\infty$.
We consider the following two cases.

\noindent {\bf Case 1:} $\mu(E)>\frac 12 \mu(\Omega)$. Then by \eqref{eq:def of theta},
\begin{align*}
 P(E,X)
 &\le P(E,\Omega)+P(E,\partial\Omega)\\
 &\le P(E,\Omega)+C \mathcal{H}(\partial\Omega)\\
 &\le P(E,\Omega)
     +2C\frac{\mathcal{H}(\partial\Omega)}{\mu(\Omega)}\mu(E)\le C \Vert\chi_E\Vert_{\BV(\Omega)}
\end{align*}
for some $C=C(C_d,C_P,\lambda,C_{\partial\Omega},c_m,\diam(\Omega))$, as desired.
%

\noindent {\bf Case 2:}
$\mu(E)\le \frac 12 \mu(\Omega)$. The perimeter measure of $E$ is carried on the set 
$E_\gamma$, where 
$E_\gamma$ consists of all the points $x\in\partial E$ that satisfy~\eqref{eq:density of E}, see~\cite[Theorem~5.4]{A2}.
We therefore need to control $\mathcal{H}(E_\gamma\cap\partial\Omega)$ in terms of $P(E,\Omega)$.
Fix $x\in E_\gamma\cap\partial\Omega$.
Since $x\in E_\gamma$, for some $r_0>0$ we have that for all $0<r<r_0$,
\[
  \frac{\mu(B(x,r)\cap E)}{\mu(B(x,r)\cap\Omega)}>\frac{\gamma}{2}.
\]
Fix such $r>0$. Because $\mu(E)\le \frac 12 \mu(\Omega)$, we can choose the smallest $j\in\N$ such that 
\[
  \frac{\mu(B(x,2^jr)\cap E)}{\mu(B(x,2^jr)\cap\Omega)}\le \frac{1}{2}.
\] 
If $j=1$, then by the choice of $r$ we have
\[
 \frac{\mu(B(x,r)\cap E)}{\mu(B(x,r)\cap\Omega)}>\frac{\gamma}{2}.
\]
If $j>1$, then we again have
\[
   \frac{\mu(B(x,2^{j-1}r)\cap E)}{\mu(B(x,2^{j-1}r)\cap\Omega)}>\frac{1}{2}\ge \frac{\gamma}{2},
\]
and so by the doubling property of $\mu$ together with the measure density condition for $\Omega$,
\[
 \frac{1}{2}\ge \frac{\mu(B(x,2^jr)\cap E)}{\mu(B(x,2^jr)\cap\Omega)}
      \ge \frac{\mu(B(x,2^{j-1}r)\cap E)}{\mu(B(x,2^{j-1}r)\cap\Omega)}\frac{\mu(B(x,2^{j-1}r)\cap\Omega)}{\mu(B(x,2^jr)\cap\Omega)}
      \ge \frac{\gamma}{2} \frac{c_m}{C_d}.
\]
Set $r_x:=2^j r$.
By using the relative isoperimetric inequality in $\Omega$, the measure density condition, and the above, we get
\begin{align*}
C P(E,\Omega\cap B(x, \lambda r_x))
\ge \frac{\gamma}{2} \frac{c_m}{C_d}\frac{\mu(B(x,r_x)\cap \Omega)}{r_x}
\ge \frac{\gamma}{2} \frac{c_m^2}{C_d^2}\frac{\mu(B(x,r_x))}{r_x}.
\end{align*}
Thus we get a covering
$\{B(x,\lambda r_x)\}_{x\in \partial\Omega\cap E_{\gamma}}$ of $\partial\Omega\cap E_\gamma$
in which every ball $B(x,r_x)$ satisfies the above inequality.
By the $5$-covering theorem, we can pick a countable collection $\{B_j=B(x_j, \lambda r_j)\}_{j=1}^{\infty}$ of pairwise 
disjoint balls such that $\{B(x_j,5\lambda  r_j)\}_{j=1}^{\infty}$ is a cover of $\partial\Omega\cap E_{\gamma}$. 
Therefore by~\eqref{eq:codimension boundary condition} and the doubling property of $\mu$, together with the fact
that the perimeter measure of $E$ is comparable to $\mathcal{H}\vert_{E_\gamma}$ (see~\eqref{eq:def of theta}), we get
\begin{align*}
\mathcal{H}(E_\gamma\cap\partial\Omega)&\le \sum_{j\in\N}\mathcal{H}(E_\gamma\cap\partial\Omega\cap 5\lambda B_j)\\
 &\le C_{\partial\Omega} \sum_{j\in\N}\frac{\mu(5\lambda  B_j)}{5r_j}\\
 &\le C \sum_{j\in\N}\frac{\mu(B_j)}{r_j}
 \le C \sum_{j\in\N} P(E,\Omega\cap \lambda  B_j)\le C P(E,\Omega).
\end{align*}
%
%
Thus by~\eqref{eq:def of theta} again,
\[
  P(E,X)=P(E,\Omega)+P(E,\partial\Omega)\le P(E,\Omega)+C \mathcal{H}(E_\gamma\cap\partial\Omega)
     \le C P(E,\Omega).
\]

Finally, for $u\in \BV(\Omega)$, with $\widehat{u}$ denoting the zero extension of $u$ to $X\setminus\Omega$, by the
coarea formula~\eqref{eq:coarea} we have 
\begin{align*}
  \Vert D\widehat{u}\Vert(X)&=\int_{-\infty}^\infty P(\{\widehat{u}>t\},X)\, dt\\
      &\le C \int_{-\infty}^\infty P(\{u>t\},\Omega)\, dt=C \Vert Du\Vert(\Omega)
\end{align*}
as desired.
\end{proof}

Suppose $\Omega\subset X$ is a bounded open set and $u\in\BV(\Omega)$, 
and suppose that the trace $Tu(x)$ exists for $\mathcal H$-a.e. $x\in\partial\Omega$. We wish to 
have the following $L^1$ estimate for the trace:
\begin{equation}\label{eq:trace L1 estimate}
\int_{\partial\Omega}|Tu|\,d\mathcal H\le C_T\left(\int_{\Omega}|u|\,d\mu+\Vert Du\Vert(\Omega)\right)
\end{equation}
for a constant $C_T>0$ which is independent of $u$. This kind of integral-type inequality is closely 
related to the condition~\eqref{eq:codimension boundary condition}, as we will now show.

\begin{proposition}
Let $\Omega\subset X$ be open and bounded. If \eqref{eq:trace L1 estimate} holds for every $u\in\BV(\Omega)$, then there is a constant
 $C_{\partial\Omega}=C_{\partial\Omega}(C_T,C_d, \text{\rm diam}(\Omega))>0$ such that 
the boundary of $\Omega$ 
satisfies~\eqref{eq:codimension boundary condition} for all 
$x\in\partial\Omega$ and all $r\in (0,2\diam(\Omega))$.
\end{proposition}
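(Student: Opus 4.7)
The plan is to test the hypothesized inequality \eqref{eq:trace L1 estimate} against an explicit Lipschitz cutoff. Fix $x\in\partial\Omega$ and $r\in(0,2\diam(\Omega))$, and define
\[
u(y):=\max\bigl\{0,\,1-r^{-1}\dist(y,B(x,r))\bigr\}\qquad\text{for }y\in X.
\]
Then $u$ equals $1$ on $B(x,r)$, is supported in $\overline{B(x,2r)}$, and is $(1/r)$-Lipschitz. Restricting $u$ to $\Omega$ gives a function in $\BV(\Omega)$ with
\[
\int_{\Omega}|u|\,d\mu\le \mu(B(x,2r))\qquad\text{and}\qquad \|Du\|(\Omega)\le r^{-1}\mu(B(x,2r)),
\]
the second estimate coming from the upper gradient $g_u=r^{-1}\chi_{B(x,2r)\setminus B(x,r)}$. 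Using $r\le 2\diam(\Omega)$, I can absorb the $L^1$ term into the total-variation term to get $\|u\|_{\BV(\Omega)}\le (1+2\diam(\Omega))\,r^{-1}\mu(B(x,2r))$.

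Next I would identify $Tu$ on $B(x,r)\cap\partial\Omega$. Because $u$ is continuous on $X$ and identically $1$ on the open set $B(x,r)$, at any $y\in B(x,r)\cap\partial\Omega$ where the trace is defined, choosing $\rho>0$ so small that $B(y,\rho)\subset B(x,r)$ forces $u\equiv 1$ on $\Omega\cap B(y,\rho)$, and the defining limit \eqref{eq:definition of traces} shows that $Tu(y)=1$. Thus $Tu=1$ at $\mathcal H$-a.e. point of $B(x,r)\cap\partial\Omega$; here I am tacitly using that the hypothesis of the proposition implicitly requires $Tu$ to be defined $\mathcal H$-a.e. on $\partial\Omega$ in order for \eqref{eq:trace L1 estimate} to make sense.

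Finally I plug $u$ into \eqref{eq:trace L1 estimate} and invoke the doubling property $\mu(B(x,2r))\le C_d\,\mu(B(x,r))$:
\[
\mathcal H(B(x,r)\cap\partial\Omega)\le\int_{\partial\Omega}|Tu|\,d\mathcal H\le C_T\bigl(1+2\diam(\Omega)\bigr)\frac{\mu(B(x,2r))}{r}\le C_d\,C_T\bigl(1+2\diam(\Omega)\bigr)\frac{\mu(B(x,r))}{r}.
\]
This yields \eqref{eq:codimension boundary condition} with $C_{\partial\Omega}=C_d\,C_T\,(1+2\diam(\Omega))$, matching the stated dependence. I do not anticipate a genuine obstacle: the argument is essentially a one-line test-function computation, and the only subtle point is ensuring that the $L^1$ contribution of the cutoff is dominated by its total variation, which is precisely what the diameter bound on $r$ guarantees.
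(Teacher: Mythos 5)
Your proof is correct, and it takes a route that is genuinely different from, and arguably cleaner than, the paper's. The paper tests \eqref{eq:trace L1 estimate} against the characteristic function $u=\chi_{B(x,r_i)\cap\Omega}$, which forces it to invoke a lemma from \cite{KKST} guaranteeing, on each dyadic scale, a ``good'' radius $r_i\in[2^i,2^{i+1}]$ with $P(B(x,r_i),X)\le C_d\,\mu(B(x,r_i))/r_i$; it then has to chain $P(B(x,r_i),\Omega)\le P(B(x,r_i),X)$ and finish with an extra doubling step to pass from the discrete family $\{r_i\}$ to arbitrary radii. You instead test against the $(1/r)$-Lipschitz cutoff $u(y)=\max\{0,1-r^{-1}\dist(y,B(x,r))\}$, whose $\BV$-norm in $\Omega$ is controlled by the elementary upper gradient $g_u=r^{-1}\chi_{\overline{B(x,2r)}\setminus B(x,r)}$, so you get the bound $\|u\|_{\BV(\Omega)}\le(1+2\diam(\Omega))\,r^{-1}\mu(B(x,2r))$ directly for every $r\in(0,2\diam(\Omega))$ without any ``good radius'' lemma and without the final doubling reduction. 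Your identification $Tu\equiv 1$ on $B(x,r)\cap\partial\Omega$ is sound (continuity of $u$ near $B(x,r)$ plus the pointwise definition \eqref{eq:definition of traces}), and your remark that the hypothesis tacitly presupposes $Tu$ defined $\mathcal H$-a.e.\ matches the paper's framing in the paragraph preceding \eqref{eq:trace L1 estimate}. What your approach buys is a shorter, self-contained, one-radius argument; what the paper's approach buys is a demonstration that the weaker class of characteristic-function test data already forces the condition, but at the price of an extra auxiliary lemma and a dyadic-to-continuous patch-up. Both yield the same constant dependence $C_{\partial\Omega}=C_{\partial\Omega}(C_T,C_d,\diam(\Omega))$.
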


\begin{proof}
Pick $x\in\partial\Omega$. For every $i\in \Z$, there exists $r_i\in [2^{i},2^{i+1}]$ such that
\[
P(B(x,r_i),X)\le C_d\frac{\mu(B(x,r_i))}{r_i},
\]
see \cite[Lemma 6.2]{KKST}. We consider $i\in\Z$ for which
$2^{i}\le 4\diam(\Omega)$. Given such an $i$, we choose $u=\chi_{B(x,r_i)\cap\Omega}$.
Then we have
\begin{align*}
\mathcal H(\partial\Omega\cap B(x,r_i))
\le\int_{\partial\Omega}|Tu|\,d\mathcal H
&\le C_T\left(\int_{\Omega}|u|\,d\mu+\Vert Du\Vert(\Omega)\right)\\
&= C_T\left(\mu(B(x,r_i)\cap\Omega)+P(B(x,r_i),\Omega)\right)\\
&\le C_T\left(\mu(B(x,r_i))+P(B(x,r_i),X)\right).
\end{align*}
By the choice of $r_i$, we now have
\begin{align*}
\mathcal H(\partial\Omega\cap B(x,r_i))&\le C_T C_d\left(\mu(B(x,r_i))+\frac{\mu(B(x,r_i))}{r_i}\right)\\
&\le 8C_T C_d[1+\text{diam}(\Omega)]\, \frac{\mu(B(x,r_i))}{r_i}.
\end{align*}
Now by the doubling property of $\mu$ it follows that the above holds for all radii $r\le 2\diam(\Omega)$, by inserting an 
additional factor $C_d^2$ on the right-hand side.
\end{proof}

Now we prove the second of the two main theorems of this paper. 

\begin{theorem}\label{thm:L1-est}
Let $\Omega$ be a bounded domain that supports a 
$(1,1)$-Poincar\'e inequality  and that there is a constant $c_m>0$ such that
the measure density condition~\eqref{eq:measure density condition}
holds for all $x\in\partial\Omega$, $r\in (0, \diam(\Omega))$, and that~\eqref{eq:codimension boundary condition} holds. Then 
there is a constant $C_T=C_T(C_d,C_P,\lambda,\Omega, c_m)>0$ such that~\eqref{eq:trace L1 estimate} holds, that is,
\[
\int_{\partial\Omega}|Tu|\,d\mathcal H\le C_T\left(\int_{\Omega}|u|\,d\mu+\Vert Du\Vert(\Omega)\right)
\]
for all $u\in\BV(\Omega)$.
\end{theorem}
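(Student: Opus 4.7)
My plan is to reduce the $L^1$ estimate to a level-set estimate via the layer-cake formula, and then to control each level set of $Tu$ using the exit-radius method from the proof of Proposition~\ref{thm:extension result}.

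First, by splitting $Tu$ into positive and negative parts and invoking linearity of $T$, it suffices to bound $\int_{\partial\Omega}(Tu)^+\, d\mathcal{H}$; a symmetric argument with the level sets $\{u<-t\}$ handles $(Tu)^-$. By Cavalieri's principle,
\[
\int_{\partial\Omega}(Tu)^+\, d\mathcal{H}=\int_0^\infty\mathcal{H}(A_t)\, dt,\qquad A_t:=\{x\in\partial\Omega:Tu(x)>t\}.
\]

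The key step will be to prove, for every $t>0$, the level-set bound
\[
\mathcal{H}(A_t)\le C\bigl[P(\{u>t\},\Omega)+\mu(\{u>t\})\bigr].
\]
Setting $E_t:=\{u>t\}\subset\Omega$, I will first observe that the trace condition~\eqref{eq:definition of traces} together with the measure density condition~\eqref{eq:measure density condition} forces, for each $x\in A_t$ and all sufficiently small $r$, the $\Omega$-relative density $\mu(B(x,r)\cap E_t)/\mu(B(x,r)\cap\Omega)$ to exceed $1/2$. I then split into two cases as in Proposition~\ref{thm:extension result}. If $\mu(E_t)>\mu(\Omega)/2$, the finiteness of $\mathcal{H}(\partial\Omega)$ (obtained from~\eqref{eq:codimension boundary condition} applied with $r=2\diam\Omega$) yields $\mathcal{H}(A_t)\le\mathcal{H}(\partial\Omega)\le C\mu(E_t)/\mu(\Omega)$. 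If $\mu(E_t)\le\mu(\Omega)/2$, then since $B(x,2\diam\Omega)\supset\Omega$, the above density ratio is at most $1/2$ at scale $2\diam\Omega$, so a dyadic exit radius $r_x$ exists at which both $\mu(B(x,r_x)\cap\Omega\cap E_t)$ and $\mu(B(x,r_x)\cap\Omega\setminus E_t)$ are comparable to $\mu(B(x,r_x)\cap\Omega)$. The relative isoperimetric inequality in $\Omega$ (which supports a $(1,1)$-Poincar\'e inequality) together with the measure density condition then yields $\mu(B(x,r_x))/r_x\le C\,P(E_t,\Omega\cap B(x,\lambda r_x))$. A $5$-covering argument combined with~\eqref{eq:codimension boundary condition} and the pairwise disjointness of the selected dilated balls gives $\mathcal{H}(A_t)\le C\,P(E_t,\Omega)$.

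Finally, I integrate the level-set estimate over $t>0$. The perimeter terms sum by the coarea formula~\eqref{eq:coarea} to $\|Du\|(\Omega)$. The volumetric term $\mu(E_t)$ is only incurred in the first case, which by Chebyshev's inequality can only occur for $t\le 2\|u\|_{L^1(\Omega)}/\mu(\Omega)$, and there $\mathcal{H}(A_t)\le\mathcal{H}(\partial\Omega)$ is uniformly bounded, contributing $C\|u\|_{L^1(\Omega)}$. This yields the desired inequality~\eqref{eq:trace L1 estimate}. The main obstacle is the exit-radius construction in the small-measure case: a boundary point with $Tu(x)>t$ need not lie in the measure-theoretic boundary (in $X$) of the zero extension of $E_t$, since $X\setminus\Omega$ may have vanishing density at $x$. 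Consequently one cannot invoke Proposition~\ref{thm:extension result} as a black box through the $\theta_{E_t}$ representation~\eqref{eq:def of theta}; instead one must run the exit-radius argument directly on the $\Omega$-relative density, using the measure density condition to transfer the resulting bounds from $\Omega$-scale to $X$-scale.
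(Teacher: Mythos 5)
Your argument is correct in substance, but it takes a genuinely different route from the paper. The paper's proof first establishes that $Tu$ is Borel (via continuity of $x\mapsto u_{\Omega\cap B(x,r)}$, using Buckley's annulus estimate), then extends $u$ by zero to all of $X$ using Proposition~\ref{thm:extension result}, invokes the external estimate $\int_{\partial\Omega}(|(Eu)^{\wedge}|+|(Eu)^{\vee}|)\,d\mathcal H\le C\Vert Eu\Vert_{\BV(X)}$ from \cite[Proposition 5.12]{HKLL}, and finally sandwiches $Tu$ between $(Eu)^{\wedge}$ and $(Eu)^{\vee}$ via the measure density condition. You instead run a layer-cake argument and re-prove, at each level $t$, the covering estimate $\mathcal H(A_t)\le C\bigl[P(\{u>t\},\Omega)+\mu(\{u>t\})\bigr]$ by inlining the Case 1/Case 2 exit-radius analysis from the proof of Proposition~\ref{thm:extension result} --- with the crucial twist that the density lower bound at points of $A_t$ comes from the trace condition~\eqref{eq:definition of traces} (Chebyshev applied to $|u-Tu(x)|\ge Tu(x)-t$ on $\Omega\setminus\{u>t\}$) rather than from~\eqref{eq:density of E}. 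Your closing observation is exactly right and is the real content of the modification: a point of $A_t$ need not belong to $\partial^*E_t$ in $X$ (the complement of $\Omega$ may have vanishing density there), so one cannot route the estimate through the representation~\eqref{eq:def of theta} of $P(E_t,X)$; the paper circumvents the same obstruction by passing to the approximate limits of the extension instead. What your approach buys is self-containedness --- no appeal to \cite{HKLL} and no need for the extension operator as a black box; what the paper's approach buys is that the bulk of the work is delegated to two quotable results and the measurability of $Tu$ is handled explicitly.

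One small point you should not skip: your Cavalieri step $\int_{\partial\Omega}(Tu)^+\,d\mathcal H=\int_0^\infty\mathcal H(A_t)\,dt$ presupposes that $Tu$ is $\mathcal H$-measurable on $\partial\Omega$, which is not automatic from the pointwise definition of the trace; the paper devotes a paragraph to proving Borel measurability, and you should either import that argument or phrase your estimate in terms of the Choquet (upper) integral, for which the level-set bound still suffices since $\mathcal H(\partial\Omega)<\infty$ by~\eqref{eq:codimension boundary condition}. The remaining technicalities in your sketch (recentering the relative isoperimetric inequality at boundary points, radii possibly reaching $\diam(\Omega)$) are present to the same degree in the paper's own proof of Proposition~\ref{thm:extension result} and are absorbed into constants depending on $\Omega$.
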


\begin{proof}
By Theorem~\ref{thm:trace theorem}, the 
trace $Tu(x)$ exists for $\mathcal H$-a.e. $x\in\partial\Omega$.
First we show that $Tu$ is a Borel function on $\partial\Omega$, so that the integral 
in~\eqref{eq:trace L1 estimate} makes sense. Since $X$ supports a Poincar\'e inequality, we 
know that a bi-Lipschitz change in the metric results in a geodesic space, i.e. a space where 
every pair of points can be joined by a curve whose length is equal to the distance between the two points, 
see~\cite[Proposition 4.4]{HaKo}. It is easy to see that traces are invariant under this transformation because the 
measure density condition for $\Omega$ holds. It follows from~\cite[Corollary 2.2]{Buc} 
that there are positive constants $C, \delta$ such that whenever $x\in X$, $r>0$, and $\eps>0$, 
\[
\mu(B(x,r[1+\eps])\setminus B(x,r))\le C\mu(B(x,r))\eps^\delta.
\]
It follows from the above condition
and the
absolute continuity of integrals of $u$ that the function
\[
\partial\Omega \ni x\mapsto u_{\Omega\cap B(x,r)}
\]
for any fixed $r>0$ is continuous. Since $Tu$ is the limit of these functions as $r\to 0$, it is a Borel function.

By Proposition~\ref{thm:extension result} 
the zero extension $\widehat{u}=Eu\in\BV(X)$. By \cite[Proposition 5.12]{HKLL} we know that
\[
\int_{\partial\Omega}|(Eu)^{\wedge}|+|(Eu)^{\vee}|\,d\mathcal H\le C\Vert Eu\Vert_{\BV(X)}.
\]
By the measure density condition, it is clear that
\[
(Eu)^{\wedge}(x)\le Tu(x)\le (Eu)^{\vee}(x)
\]
for every $x\in\partial\Omega$ for which the trace $Tu(x)$ exists.
Thus we get by Proposition \ref{thm:extension result}
\[
\int_{\partial\Omega}|Tu|\,d\mathcal H\le \int_{\partial\Omega}|(Eu)^{\wedge}|
   +|(Eu)^{\vee}|\,d\mathcal H\le C\Vert Eu\Vert_{\BV(X)}\le C\Vert u\Vert_{\BV(\Omega)}.
\]
\end{proof}

\begin{example}
Note that once again, if we define $\Omega$ by \eqref{eq:definition of exterior cusp} as an exterior 
cusp in the unweighted space $\R^2$, and consider functions $u_i:=\chi_{B((0,0),1/i)\cap\Omega}$, $i\in\N$, we 
see  that we do not have the $L^1$ estimate~\eqref{eq:trace L1 estimate}, and neither do we have the measure 
density condition \eqref{eq:measure density condition}. If we consider $\overline{\Omega}$ as our metric space, 
we obviously have the measure density condition \eqref{eq:measure density condition}, but then we do not 
have~\eqref{eq:codimension boundary condition}, and again the $L^1$ estimate fails.
\end{example}

On the other hand, the proof of Theorem~\ref{thm:L1-est} does not really require that $\Omega$ 
supports a $(1,1)$-Poincar\'e inequality provided a trace operator exists, as demonstrated in the following example.

\begin{example}
Let $X=\R^2$ and let $\Omega$ be the internal cusp domain given by
\[
 \Omega:=\{x=(x_1,x_2)\in\R^2 :\, |x|<1\text{ and }|x_2|>x_1^2\text{ when }x_1\ge 0\},
\]
equipped with the restriction of the Euclidean metric and Lebesgue measure.
Note that $\Omega$ satisfies the measure density condition and~\eqref{eq:codimension boundary condition},
but $\Omega$ does not support a $(1,1)$-Poincar\'e inequality. However, by Corollary~\ref{cor:exceptional-points} 
we know that functions in $\BV(\Omega)$ do have a trace, and the proof of Theorem \ref{thm:L1-est} shows that
the traces are $\mathcal{H}$-measurable on $\partial\Omega$. Furthermore, $\Omega$
is a BV extension domain, see for example~\cite[Theorem 1.1]{KMS}. 
Now it follows as in the proof of Theorem~\ref{thm:L1-est}
that traces of functions in $\BV(\Omega)$ satisfy an $L^1$ estimate.  Thus the conclusion of Theorem~\ref{thm:L1-est}
is more widely applicable than it first appears.
\end{example}

\section{BV functions with zero trace}

In this section we study functions in $\BV(\Omega)$ that have zero trace on $\partial\Omega$. In considering
Dirichlet problems for a given domain, one is interested in minimizing certain energies of a function from amongst
the class of all functions with the same boundary value; that is, the difference of the test function and the boundary
data should have zero trace. Hence an understanding of $\BV$ functions with zero trace is of importance. 

The following argument can be found in \cite{KKST}, but we give a proof for the reader's convenience. 
Given $f\in\BV(X)$, recall the definition of
the representative $\widetilde{f}$ from~\eqref{eq:pointwise-rep}.

\begin{theorem}\label{thm:zero extension}
Let $\Omega\subset X$ be an open set and let $u\in \BV(\Om)$. Assume either that the space supports a 
strong relative isoperimetric inequality, or that $\mathcal H(\partial\Omega)<\infty$. Then
\begin{equation}\label{eq:weak zero trace condition}
\lim_{r\to 0^+}\frac{1}{\mu(B(x,r))}\int_{B(x,r)\cap\Omega}|u|\,d\mu=0
\end{equation}
for $\cH$-a.e. $x\in\bdy$ if and only if
the zero extension of $u$ into the whole space $X$, denoted by $\widehat{u}$,
is in $\BV(X)$ with $\Vert D\widehat{u}\Vert(X\setminus \Omega)=0$ and 
$\widetilde{\widehat{u}}(x)=0$ for $\mathcal H$-a.e. $x\in\partial\Omega$.

If in 
addition $\Omega$ satisfies the measure density condition~\eqref{eq:measure density condition}, then
the above conditions are also equivalent with the condition of having $Tu(x)=0$ for $\mathcal H$-a.e. $x\in \partial\Omega$.
\end{theorem}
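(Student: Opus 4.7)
The plan is to prove the two implications of the equivalence separately, and then read off the third equivalence from the definitions. The coarea formula~\eqref{eq:coarea}, the BV decomposition~\eqref{eq:decomposition}, and a Chebyshev-type conversion of the $L^1$-type weak zero trace condition into density information at $\partial\Omega$ are the three main tools.

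For the forward implication, assume~\eqref{eq:weak zero trace condition} holds at $\mathcal{H}$-a.e.\ $x\in\partial\Omega$. First, for $t>0$ we have $\{\widehat{u}>t\}=\{u>t\}\cap\Omega$, and Chebyshev's inequality yields
\[
\frac{\mu(B(x,r)\cap\{\widehat{u}>t\})}{\mu(B(x,r))}\le \frac{1}{t\,\mu(B(x,r))}\int_{B(x,r)\cap\Omega}|u|\,d\mu,
\]
whose right-hand side tends to zero for $\mathcal{H}$-a.e.\ $x\in\partial\Omega$ by hypothesis; a symmetric estimate on $\{\widehat{u}\le t\}=\{u\le t\}\cap\Omega$ handles $t<0$. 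Thus the upper density of $\{\widehat{u}>t\}$ at such $x$ vanishes (or, for $t<0$, its complement's upper density vanishes), so $x\notin\partial^*\{\widehat{u}>t\}$, and hence by~\eqref{eq:def of theta}
\[
P(\{\widehat{u}>t\},\partial\Omega)=\int_{\partial^*\{\widehat{u}>t\}\cap\partial\Omega}\theta_{\{\widehat{u}>t\}}\,d\mathcal{H}=0\quad\text{for a.e.}\ t\ne 0.
\]
Second, for a.e.\ $t$ the coarea formula for $u$ gives $P(\{u>t\},\Omega)<\infty$, and either the strong relative isoperimetric inequality (using $\mathcal{H}(\partial^*\{\widehat{u}>t\})\le \mathcal{H}(\partial^*\{u>t\}\cap\Omega)<\infty$ from~\eqref{eq:def of theta} together with the previous step) or~\cite[Proposition~6.3]{KKST} under $\mathcal{H}(\partial\Omega)<\infty$ promotes this to $P(\{\widehat{u}>t\},X)<\infty$. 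Applying~\eqref{eq:coarea} to $\widehat{u}\in L^1_{\loc}(X)$ with $F=X$ then gives
\[
\|D\widehat{u}\|(X)=\int_{-\infty}^{\infty}P(\{\widehat{u}>t\},X)\,dt=\int_{-\infty}^{\infty}P(\{u>t\},\Omega)\,dt=\|Du\|(\Omega)<\infty,
\]
so $\widehat{u}\in\BV(X)$, and applying~\eqref{eq:coarea} with $F=\partial\Omega$ together with $\widehat{u}\equiv 0$ on $X\setminus\overline{\Omega}$ yields $\|D\widehat{u}\|(X\setminus\Omega)=0$. The same Chebyshev estimate shows $\widehat{u}^\vee(x)\le 0\le\widehat{u}^\wedge(x)$ at these $x$, so $\widetilde{\widehat{u}}(x)=0$.

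For the reverse implication, suppose $\widehat{u}\in\BV(X)$ with $\|D\widehat{u}\|(X\setminus\Omega)=0$ and $\widetilde{\widehat{u}}=0$ $\mathcal{H}$-a.e.\ on $\partial\Omega$. The jump part of~\eqref{eq:decomposition} restricted to $\partial\Omega$ reads
\[
0=\|D\widehat{u}\|^j(\partial\Omega)=\int_{\partial\Omega\cap S_{\widehat{u}}}\int_{\widehat{u}^\wedge(x)}^{\widehat{u}^\vee(x)}\theta_{\{\widehat{u}>t\}}(x)\,dt\,d\mathcal{H}(x),
\]
and since $\theta\ge\alpha>0$ and $\mathcal{H}$ is $\sigma$-finite on $S_{\widehat{u}}$, this forces $\mathcal{H}(S_{\widehat{u}}\cap\partial\Omega)=0$, so $\widehat{u}^\wedge(x)=\widehat{u}^\vee(x)=0$ at $\mathcal{H}$-a.e.\ $x\in\partial\Omega$. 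The BV Lebesgue point theorem~\cite[Theorem~3.5]{KKST2} applied to $\widehat{u}\in\BV(X)$ then yields
\[
\lim_{r\to 0^+}\frac{1}{\mu(B(x,r))}\int_{B(x,r)}|\widehat{u}|\,d\mu=0
\]
at such $x$, and since $\widehat{u}\equiv 0$ on $X\setminus\Omega$ this is exactly~\eqref{eq:weak zero trace condition}.

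Finally, under the measure density condition~\eqref{eq:measure density condition}, the comparability $c_m\,\mu(B(x,r))\le\mu(B(x,r)\cap\Omega)\le\mu(B(x,r))$ shows that~\eqref{eq:weak zero trace condition} is equivalent to $\lim_{r\to 0^+}\dashint_{B(x,r)\cap\Omega}|u|\,d\mu=0$, i.e.\ $Tu(x)=0$, giving the third equivalence. The main technical obstacle is the forward step of showing $\|D\widehat{u}\|(\partial\Omega)=0$: this requires upgrading the $L^1$-type smallness of $u$ near $\partial\Omega$ to density-zero information on the level sets (the Chebyshev step) and then invoking the appropriate hypothesis (isoperimetry or finite boundary measure) so that each level set of $\widehat{u}$ has finite perimeter in $X$ and the coarea formula can be applied globally.
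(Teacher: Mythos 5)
Your proposal is correct and follows essentially the same route as the paper's own proof: the Chebyshev/level-set density argument to get $\mathcal H(\partial^*\{\widehat{u}>t\}\cap\partial\Omega)=0$, promotion to $P(\{\widehat{u}>t\},X)<\infty$ via the strong relative isoperimetric inequality or \cite[Proposition~6.3]{KKST}, the coarea formula, and, for the converse, the decomposition~\eqref{eq:decomposition} plus the Lebesgue point theorem of \cite[Theorem~3.5]{KKST2}. The only (harmless) difference is that you record the exact identity $\Vert D\widehat{u}\Vert(X)=\Vert Du\Vert(\Omega)$ where the paper contents itself with a bound $\le C\Vert Du\Vert(\Omega)$, and you spell out the verification of $\widetilde{\widehat{u}}=0$ which the paper leaves implicit.
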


Observe that if $u$ has a trace $Tu$ on $\partial\Omega$, then $u$ necessarily satisfies~\eqref{eq:weak zero trace condition}
and $Tu=0$ $\mathcal{H}$-a.e.~in $\partial\Omega$.

\begin{proof}
First assume that~\eqref{eq:weak zero trace condition} holds for $\mathcal{H}$-a.e.~$x\in\partial\Omega$. 
We denote the level sets of $\widehat{u}$ by
\[
  E_t:=\{x\in X :\, \widehat{u}(x)>t\},\qquad t\in\R.
\]
Fix $t<0$. Then clearly $X\setminus\Om\subset E_t$. If $x\in\partial\Omega$ such 
that~\eqref{eq:weak zero trace condition} holds, then because on $\Omega\setminus E_t$ we have $|u|\ge t$, we get
\[
\limsup_{r\to 0^+}\frac{\mu(B(x,r)\cap\Omega\setminus E_t)}{\mu(B(x,r))}
   \le \limsup_{r\to 0^+} \frac{1}{|t|\mu(B(x,r))}\int_{B(x,r)\cap\Omega}|u|\,d\mu=0.
\]
Hence $E_t$ has density $1$ at $\mathcal H$-a.e. $x\in\partial\Omega$.

Now fix $t>0$. Then
$E_t\subset\Omega$, and if $x\in\partial\Omega$ such that \eqref{eq:weak zero trace condition} 
holds, then because on $E_t$ we have $|u|\ge t$, we get
\[
 \limsup_{r\to 0^+}\frac{\mu(B(x,r)\cap E_t)}{\mu(B(x,r))}
    \le \limsup_{r\to 0^+} \frac{1}{|t|\mu(B(x,r))}\int_{B(x,r)\cap\Omega}|u|\,d\mu=0,
\]
that is, $E_t$ has density 0 at $x$. 

From the above argument we know that 
$\mathcal H(\partial^{*}E_t\cap \partial\Omega)=0$ whenever $t\ne 0$. Since $\widehat{u}$ is a zero 
extension, clearly we have $\partial^*E_t\setminus \overline{\Omega}=\emptyset$ for all $t\in\R$. By the 
coarea formula~\eqref{eq:coarea} we know that $P(E_t,\Omega)<\infty$ for a.e. $t\in\R$. Thus 
for a.e. $t\in\R$, by \eqref{eq:def of theta} we have
\[
\mathcal H(\partial^{*}E_t)=\mathcal H(\partial^{*}E_t\cap\Omega)\le C P(E_t,\Omega)<\infty.
\]
If $\mathcal H(\partial\Omega)<\infty$, by \cite[Proposition 6.3]{KKST} we know that for such $t$, 
necessarily $P(E_t,X)<\infty$. 
The same is true if $X$ supports a strong relative isoperimetric inequality. By~\eqref{eq:def of theta} we then have
\[
P(E_t,X)\le C\mathcal H(\partial^*E_t)\le CP(E_t,\Omega),
\]
and then it follows from the coarea formula \eqref{eq:coarea} that
\[
  \Vert D\widehat{u}\Vert(X)=\int_{-\infty}^\infty P(E_t,X)\, dt\le C \int_{-\infty}^\infty P(E_t,\Omega)\, dt 
      = C\Vert Du\Vert(\Omega)<\infty
\]
and
\[
  \Vert D\widehat{u}\Vert(\bdy)=\int_{-\infty}^\infty P(E_t,\bdy)\, dt
    \le C \int_{-\infty}^\infty \cH(\partial^*E_t\cap \partial\Omega)\, dt = 0.
\]
Hence we also have $\widehat{u}\in\BV(X)$.

Conversely, if we know that the zero extension $\widehat{u}\in\BV(X)$ and 
$\Vert D\widehat{u}\Vert(X\setminus\Omega)=0$, by the decomposition~\eqref{eq:decomposition} 
we know that $\mathcal H(S_{\widehat{u}}\cap\partial\Omega)=0$, and then by the Lebesgue 
point theorem proved in~\cite[Theorem 3.5]{KKST2}, we have
\[
\lim_{r\to 0^+}\frac{1}{\mu(B(x,r))}\int_{B(x,r)\cap\Omega}|u|\,d\mu
   =\lim_{r\to 0^+}\vint{B(x,r)}|\widehat{u}-\widetilde{\widehat{u}}(x)|\,d\mu=0
\]
for $\mathcal H$-a.e. $x\in\partial\Omega$.

The last statement of the theorem is clear.
\end{proof}

To discuss Dirichlet problems in the metric setting, one needs a criterion that determines whether
a BV function has the same boundary values as a given function (the boundary data). One such
criterion is as follows. 

\begin{definition}
We say that $u\in \BV(\Om)$ has \emph{the same boundary values} as $f\in \BV(\Omega)$ if
\begin{equation}\label{eq:boundary value condition}
\lim_{r\to 0^+}\frac{1}{\mu(B(x,r))}\int_{B(x,r)\cap\Omega}|u-f|\,d\mu=0
\end{equation}
for $\mathcal H$-a.e. $x\in\partial\Omega$.
\end{definition}

By Theorem~\ref{thm:zero extension}, 
the zero extension $\widehat{u-f}$ of $u-f$ is in $\BV(X)$ with
$\Vert D(\widehat{u-f})\Vert(\bdy)=0$. Therefore, if $f\in \BV(X)$, then 
the extension $Eu$ of $u$ by $f$ to $X\setminus\Om$ is in $\BV(X)$
with $\Vert D(Eu - f)\Vert(\bdy)=0$. In the Euclidean setting, this is the idea behind the direct method developed by
Giusti, see~\cite[Chapter~14]{Giu84}.

In particular, the condition $Tu(x)=Tf\vert_\Om(x)$ implies $T(u-f\vert_\Om)(x)=0$, which in turn 
implies~\eqref{eq:boundary value condition}, but of course the traces may fail to exist, as was the case in 
Example~\ref{ex:exterior cusp counterexample}.  

\begin{definition}\label{def:zero-bdy}
We define $\BV_0(\Om)$ to be the class of functions 
$u\in \BV(\Om)$ that satisfy~\eqref{eq:weak zero trace condition} for $\cH$-a.e.~$x\in\bdy$.
\end{definition}

Note that in~\cite{HKL} and~\cite{HKLL} a class of BV functions with zero boundary values is defined very 
differently, as functions $u\in\BV(X)$ with $u=0$ outside $\Omega$. For example, if $\Om$ is the unit ball in $\R^n$, 
then $u=\chi_{\Omega}$ is not in $\BV_0(\Om)$,
but does have zero boundary values in the sense of~\cite{HKL} and \cite{HKLL}. In minimization problems, the 
definition for boundary values given in~\cite{HKL} and~\cite{HKLL} can be more convenient, but on the other hand it 
is unclear how much variation measure is concentrated on $\partial\Omega$.
The situation is simpler for Newton-Sobolev functions with zero boundary values 
$N^{1,1}_0(\Omega)$, since whenever $u\in N^{1,1}(X)$ with $u=0$ outside $\Omega$, we 
automatically have $\Vert Du\Vert(X\setminus\Omega)=0$; this follows e.g. from~\cite[Corollary 2.21]{BB}.
We will not pursue the study of Dirichlet problems in the current paper, but will use the above concept of boundary 
values in the study of fine properties of $\BV$ functions. 
To do so, we will need the following lemma.

\begin{lemma}\label{lem:measure in domain}
Let $\Omega\subset X$ be an open set, let $\nu$ be a Radon measure of finite mass on $\Omega$, and define
\[
  A:=\left\{x\in\bdy :\,\limsup_{r\to 0^+}r \frac{\nu(B(x,r)\cap\Om)}{\mu(B(x,r))} > 0\right\}.
\]
Then $\cH(A )=0$.
\end{lemma}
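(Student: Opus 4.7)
The plan is to use a standard $5$-covering argument combined with continuity of the Radon measure $\nu$. First I would decompose $A$ as the countable union $A=\bigcup_{k\in\N}A_k$, where
\[
A_k:=\left\{x\in\partial\Omega:\,\limsup_{r\to 0^+}r\,\frac{\nu(B(x,r)\cap\Omega)}{\mu(B(x,r))}>\tfrac{1}{k}\right\},
\]
and reduce to showing $\mathcal{H}(A_k)=0$ for every $k$; countable subadditivity of the codimension $1$ Hausdorff measure then gives $\mathcal{H}(A)=0$.

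Fix $k\in\N$ and $\delta>0$. For each $x\in A_k$ I can select a radius $r_x\in(0,\delta)$ with
\[
\frac{\mu(B(x,r_x))}{r_x}<k\,\nu(B(x,r_x)\cap\Omega).
\]
Applying the $5$-covering theorem to the family $\{B(x,r_x):x\in A_k\}$ (whose radii are bounded by $\delta$) produces a countable pairwise disjoint subfamily $\{B_i=B(x_i,r_i)\}_{i\in\N}$ such that $\{5B_i\}_{i\in\N}$ covers $A_k$. Using the doubling property of $\mu$ together with the defining inequality for $r_{x_i}$, I obtain
\[
\mathcal{H}_{5\delta}(A_k)\le\sum_{i}\frac{\mu(5B_i)}{5r_i}\le C\sum_{i}\frac{\mu(B_i)}{r_i}\le Ck\sum_{i}\nu(B_i\cap\Omega),
\]
where $C$ depends only on $C_d$.

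The remaining step is to let $\delta\to 0^+$ and show that the right-hand side vanishes. The key observation is that each $x_i\in\partial\Omega$ and $r_i\le\delta$, so $B_i\cap\Omega\subset U_\delta:=\{y\in\Omega:\dist(y,\partial\Omega)<\delta\}$. Since the balls $B_i$ are pairwise disjoint,
\[
\sum_{i}\nu(B_i\cap\Omega)\le\nu(U_\delta).
\]
Because $\Omega$ is open, every $y\in\Omega$ has $\dist(y,\partial\Omega)>0$, hence $\bigcap_{\delta>0}U_\delta=\emptyset$. As $\nu$ has finite total mass on $\Omega$, continuity of the measure from above yields $\nu(U_\delta)\to 0$ as $\delta\to 0^+$. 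This forces $\mathcal{H}_{5\delta}(A_k)\to 0$ and therefore $\mathcal{H}(A_k)=0$.

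I do not expect any real obstacle: the argument is a routine covering estimate. The one place that genuinely requires both hypotheses on $\nu$ is the step $\nu(U_\delta)\to 0$, which uses finiteness of $\nu$ together with the fact that $\nu$ is carried by $\Omega$ and that interior points of $\Omega$ stay a positive distance from $\partial\Omega$.
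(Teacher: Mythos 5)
Your proposal is correct and follows essentially the same route as the paper's proof: split $A$ into the sets where the limsup exceeds $1/k$, run the $5$-covering argument with the doubling property to bound $\mathcal{H}_{5\delta}(A_k)$ by $Ck$ times the $\nu$-mass of a $\delta$-neighborhood of $\partial\Omega$ inside $\Omega$, and let $\delta\to 0$ using continuity from above of the finite measure $\nu$ together with $\bigcap_{\delta>0}U_\delta=\emptyset$. No gaps.
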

\begin{proof} 
It suffices to show that for each $n\in\N$, the set
\[
  A_n:=\left\{x\in\bdy : \, \limsup_{r\to 0^+}r \frac{\nu(B(x,r)\cap\Om)}{\mu(B(x,r))} > \frac 1n\right\}
\]
has $\cH$-measure zero. Fix $n\in\N$ and $\delta>0$. Then for every $x\in A_n$, we can find a ball of radius $r_x<\delta/5$ such that
\[
 r_x\frac{\nu(B(x,r_x)\cap\Om)}{\mu(B(x,r_x))}>\frac 1n.
\]
By the $5$-covering theorem, we can select a countable pairwise disjoint sub-collection of the collection
$\{B(x,r_x)\}_{x\in A_n}$, denoted by $\{B(x_i,r_i)\}_{i\in\N}$, such that
$A_n\subset \bigcup_{i\in\N} B(x_i,5r_i)$. Now we have 
\begin{align*}
\cH_\delta(A_n) \le \sum_{i\in\N} \frac{\mu(B(x_i,5r_i))}{5r_i}
    &\le \frac{C_d^3}{5}\sum_{i\in\N} \frac{\mu(B(x_i,r_i))}{r_i}\\
    &\le \frac{C_d^3}{5} n \sum_{i\in\N} \nu(B(x_i,r_i)\cap\Omega)\\
    &= C n\, \nu\left(\bigcup_{i\in\N} B(x_i,r_i)\cap\Om\right).
\end{align*}
Given that $r_i<\delta/5$, we know that 
\[
  \bigcup_{i\in\N} B(x_i,r_i)\cap\Om\subset\bigcup_{x\in\bdy}B(x,\delta/5)\cap\Om.
\]
Because $\nu(\Om)<\infty$, it follows that
\begin{align*}
 \cH(A_n)=\lim_{\delta\to 0^+}\cH_\delta(A_n)
 &\le \lim_{\delta\to 0^+} C n\, \nu\left(\bigcup_{x\in\bdy}B(x,\delta/5)\cap\Omega\right)\\
 &= C n\, \nu\left(\bigcap_{\delta>0}\bigcup_{x\in\bdy}B(x,\delta/5)\cap\Omega\right)=C n\, \nu(\emptyset)=0.
\end{align*}
Thus $\mathcal H(A)=0$.
\end{proof}

In most of the remainder of the paper, we will work with Whitney type coverings of open sets.
For the construction of such coverings and their properties, see e.g.~\cite[Theorem 3.1]{BBS07}. Given any 
open set $\Omega\subset X$ and a scale $R>0$, we can choose a Whitney type covering 
$\{B_j=B(x_j,r_j)\}_{j=1}^{\infty}$ of $\Omega$ such that:
\begin{enumerate}
\item for each $j\in\N$,
\begin{equation}\label{eq:radius-dist-est}
r_j:= \min\{\dist(x_j,X\setminus \Omega)/20\lambda,R\},
\end{equation}
\item for each $k\in\N$, the ball $5\lambda B_k$ meets at most $C_0=C_0(C_d,\lambda)$ 
balls $5\lambda B_j$ (that is, a bounded overlap property holds),
\item  
if $5\lambda B_j$ meets $5\lambda B_k$, then $r_j\le 2r_k$,
\item for each $j\in\N$, 
\begin{equation}\label{eq:ball-dist-radius}
  \text{dist}(2B_j,X\setminus\Omega)\ge 18\lambda r_j.
\end{equation}
\end{enumerate}
The last estimate above follows from the first estimate combined with the fact that $\lambda\ge 1$.

Given such a covering of $\Omega$, 
we have a partition of unity $\{\phi_j\}_{j=1}^{\infty}$ subordinate to this covering, that is,  
for each $j\in\N$ the function $\phi_j$ is $C/r_j$-Lipschitz with $\supp(\phi_j)\subset 2B_j$ and $0\le \phi_j\le 1$,
such that $\sum_j\phi_j=1$ on $\Omega$,
see e.g.~\cite[Theorem 3.4]{BBS07}). Finally, we can define a \emph{discrete convolution} $u_W$ of 
$u\in L^1_{\text{\rm loc}}(\Omega)$ with respect to the Whitney type covering by
\[
u_W:=\sum_{j=1}^{\infty}u_{B_j}\phi_j.
\]
In general $u_W\in \liploc(\Omega)\subset L^1_{\text{loc}}(\Omega)$. If $u\in L^1(\Omega)$, then $u_W\in L^1(\Omega)$.

In the next proposition, no assumption is made on the geometry of the open set $\Omega$, 
so that in particular we do not know whether traces of 
BV functions exist. However, we can prove that discrete convolutions have the same boundary values 
as the original function in the sense of~\eqref{eq:boundary value condition}.

\begin{proposition}\label{prop:traces for discrete convolutions}
Let $\Omega\subset X$ be an open set, let $R>0$, 
and let $u\in\BV(\Omega)$. Let $u_W\in\liploc(\Omega)$ be the discrete 
convolution of $u$ with respect to a Whitney type covering $\{B_j=B(x_j,r_j)\}_{j=1}^{\infty}$ of $\Omega$, at  
the scale $R$. Then
\begin{equation}\label{eq:weak trace result for discrete convolutions}
\lim_{r\to 0^+}\frac{1}{\mu(B(x,r))}\int_{B(x,r)\cap\Omega}|u_W-u|\,d\mu=0
\end{equation}
for $\mathcal H$-a.e.~$x\in\partial\Omega$.
\end{proposition}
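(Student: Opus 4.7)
The plan is to control $|u_W - u|$ by sums of oscillations $|u - u_{B_j}|$ over Whitney balls, each of which is small compared to $r$ when one is near the boundary, and then invoke Lemma~\ref{lem:measure in domain}.

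First I would establish the following geometric fact: for $x\in\bdy$ and $0<r<R$, every Whitney ball $B_j=B(x_j,r_j)$ with $2B_j\cap B(x,r)\ne\emptyset$ satisfies $r_j\le Cr$ for a constant depending only on $\lambda$. Indeed, if $y\in 2B_j\cap B(x,r)$, then $\dist(x_j,X\setminus\Omega)\le d(x_j,x)\le 2r_j+r$, while by \eqref{eq:radius-dist-est} we have either $r_j=R>r$ (excluded for $r<R$ combined with this bound) or $\dist(x_j,X\setminus\Omega)\ge 20\lambda r_j$. Hence $20\lambda r_j\le 2r_j+r$, so $r_j\le r/(20\lambda-2)$. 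In particular, $2\lambda B_j\subset B(x,C'r)$ for a constant $C'=C'(\lambda)$, and by \eqref{eq:ball-dist-radius} in fact $2\lambda B_j\subset\Om$.

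Next, since $\sum_j \phi_j=1$ on $\Om$ and $\supp(\phi_j)\subset 2B_j$, pointwise in $\Om$
\[
|u_W(y)-u(y)|=\Big|\sum_j \phi_j(y)(u_{B_j}-u(y))\Big|\le\sum_{j:\,y\in 2B_j}|u_{B_j}-u(y)|.
\]
Integrating over $B(x,r)\cap\Om$ and noting only Whitney balls with $2B_j\cap B(x,r)\ne\emptyset$ contribute,
\[
\int_{B(x,r)\cap\Om}|u_W-u|\,d\mu\le\sum_{j:\,2B_j\cap B(x,r)\ne\emptyset}\int_{2B_j}|u-u_{B_j}|\,d\mu.
\]
Since $2\lambda B_j\subset\Om$, the BV Poincar\'e inequality \eqref{eq:poincare for BV} applied to the ball $2B_j$ gives
\[
\int_{2B_j}|u-u_{B_j}|\,d\mu\le C r_j\,\Vert Du\Vert(2\lambda B_j).
\]

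Combining the previous step with $r_j\le Cr$, the bounded overlap of $\{2\lambda B_j\}_j$, and the doubling of $\mu$, all enlarged balls lie in $B(x,C'r)\cap\Om$, so
\[
\frac{1}{\mu(B(x,r))}\int_{B(x,r)\cap\Om}|u_W-u|\,d\mu\le C\,r\,\frac{\Vert Du\Vert(B(x,C'r)\cap\Om)}{\mu(B(x,r))}.
\]
Finally I would apply Lemma~\ref{lem:measure in domain} to the finite Radon measure $\nu:=\Vert Du\Vert$ on $\Om$: after absorbing the constant $C'$ via doubling, the right-hand side tends to $0$ as $r\to 0^+$ for every $x\in\bdy$ outside the exceptional set $A$ of $\cH$-measure zero identified in that lemma, which yields \eqref{eq:weak trace result for discrete convolutions}.

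The main obstacle is the geometric bookkeeping in the first step, namely controlling $r_j$ by $r$ uniformly and ensuring the enlarged balls $2\lambda B_j$ are confined to a fixed dilate of $B(x,r)$; once that is in place, the Poincar\'e inequality and Lemma~\ref{lem:measure in domain} close the argument mechanically.
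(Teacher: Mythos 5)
Your argument is correct and follows essentially the same route as the paper's proof: bound $|u_W-u|$ on $B(x,r)\cap\Omega$ by the oscillations $\int_{2B_j}|u-u_{B_j}|\,d\mu$ over the Whitney balls meeting $B(x,r)$, use \eqref{eq:ball-dist-radius} to get $r_j\le Cr$, apply the Poincar\'e inequality and bounded overlap to obtain $Cr\,\Vert Du\Vert(B(x,C'r)\cap\Omega)$, and finish with Lemma~\ref{lem:measure in domain} applied to $\nu=\Vert Du\Vert$. The only cosmetic slip is your dichotomy ``either $r_j=R$ or $\dist(x_j,X\setminus\Omega)\ge 20\lambda r_j$'': by the definition \eqref{eq:radius-dist-est} of $r_j$ as a minimum, the inequality $\dist(x_j,X\setminus\Omega)\ge 20\lambda r_j$ holds in both cases, so no case needs to be excluded and the restriction $r<R$ is unnecessary.
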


\begin{proof}
Fix $x\in\partial\Omega$ and take $r>0$. Denote by $I_r$ the set of indices $j\in\N$ for which 
$2B_j\cap B(x,r)\neq \emptyset$, and note that by~\eqref{eq:ball-dist-radius},
\[
r_j\le \frac{\dist(2B_j, X\setminus \Omega)}{18\lambda}\le \frac{r}{18\lambda}
\]
for every $j\in I_r$.
Because $\sum_j\phi_j=\chi_\Omega$, we have
\begin{equation}\label{eq:discrete convolution boundary value calculation}
\begin{split}
\int_{B(x,r)\cap\Omega}|u-u_W|\,d\mu
&= \int_{B(x,r)\cap\Omega}\Big|\sum_{j\in I_r}u\phi_j-\sum_{j\in I_r}u_{B_j}\phi_j\Big|\,d\mu\\
&\le \int_{B(x,r)\cap\Omega}\sum_{j\in I_r}\Big|\phi_j(u-u_{B_j})\Big|\,d\mu\\
&\le \sum_{j\in I_r}\,\int_{2B_j}|u-u_{B_j}|\,d\mu\\
&\le 2C_d \sum_{j\in I_r}\,\int_{2B_j}|u-u_{2B_j}|\,d\mu\\
&\le 4C_d C_P \sum_{j\in I_r}r_j\Vert Du\Vert (2\lambda B_j)\\
&\le C r \Vert Du\Vert (B(x,2r)\cap\Omega).
\end{split}
\end{equation}
In the above, we used the fact that $X$ supports a $(1,1)$-Poincar\'e inequality, and in the last inequality we used the fact that $2\lambda B_j\subset\Omega\cap B(x,2r)$ for all $j\in I_r$ and
 the bounded overlap of the dilated Whitney balls $2\lambda B_j$.
Thus by Lemma \ref{lem:measure in domain}, we have
\[
\lim_{r\to 0^+}\frac{1}{\mu(B(x,r))}\int_{B(x,r)\cap\Omega}|u-u_W|\,d\mu=0
\]
for $\mathcal H$-a.e. $x\in\partial\Omega$.
\end{proof}

Let $\Omega\subset X$ be an open set, $R>0$, and let $u_W$ be the discrete convolution of a function $u\in \BV(\Omega)$ 
with respect to a Whitney type covering $\{B_j\}_{j\in\N}$ of $\Omega$ at scale $R$. Then $u_W$ has an upper gradient
\begin{equation}\label{eq:upper gradient of discrete convolution}
g=C\sum_{j=1}^{\infty}\chi_{B_j}\frac{\Vert Du\Vert(5\lambda B_j)}{\mu(B_j)}
\end{equation}
in $\Omega$, with $C=C(C_d,C_P,\lambda)$, see e.g. the proof of \cite[Proposition 4.1]{KKST2}.
Also, if each $v_i$, $i\in\N$, is a discrete convolution of a function $u\in L^1(\Omega)$ with 
respect to a Whitney type covering of $\Omega$ at scale $1/i$, then
\begin{equation}\label{eq:L1 convergence for discrete convolutions}
v_i\to u \ \textrm { in } L^1(\Omega)\ \textrm { as } 
i\to\infty,
\end{equation}
as seen by the discussion in~\cite[Lemma 5.3]{HKT}.

Combining these facts with Theorem~\ref{thm:zero extension} and 
Proposition~\ref{prop:traces for discrete convolutions}, we obtain the following properties for discrete convolutions.

\begin{corollary}\label{cor:gluing discrete convolutions}
Let $\Omega\subset X$ be an open set, and let $u\in\BV(\Omega)$. Assume either that the space supports a 
strong relative isoperimetric inequality, or that $\mathcal H(\partial \Omega)<\infty$. For each $i\in\N$ let each 
$v_i\in\liploc(\Omega)$ be a discrete convolution of $u$ in $\Omega$ at scale $1/i$. 
Then as $i\to\infty$ we have $v_i\to u$ in $L^1(\Omega)$, 
$\Vert Dv_i\Vert(\Omega)\le\int_\Omega g_{v_i}\, d\mu\le C\Vert Du\Vert(\Omega)$ for some choice of
upper gradients $g_{v_i}$ of $v_i$, and the functions
\[
w_i:=
\begin{cases}
v_i-u &\ \text{in }\Omega,\\
0  &\ \text{in }X\setminus\Omega,
\end{cases}
\]
satisfy $w_i\in \BV_0(\Omega)$ with $\Vert Dw_i\Vert(\partial\Omega)=0$.
\end{corollary}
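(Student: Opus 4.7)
The proof will be a straightforward assembly of the pieces already established in the excerpt; no new idea is needed. My plan is to dispatch the three conclusions in order.

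First, the $L^1$ convergence $v_i\to u$ in $\Omega$ is exactly the content of \eqref{eq:L1 convergence for discrete convolutions}, so nothing needs to be done.

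Second, for the variation bound I would take $g_{v_i}$ to be the upper gradient of $v_i$ furnished by \eqref{eq:upper gradient of discrete convolution}, namely
\[
g_{v_i}=C\sum_{j=1}^{\infty}\chi_{B_j^{(i)}}\frac{\Vert Du\Vert(5\lambda B_j^{(i)})}{\mu(B_j^{(i)})},
\]
where $\{B_j^{(i)}\}_j$ is the Whitney covering at scale $1/i$. Integrating over $\Omega$ kills the factor $\mu(B_j^{(i)})$ in each summand, and the bounded overlap of the dilated balls $\{5\lambda B_j^{(i)}\}$ yields $\sum_j \Vert Du\Vert(5\lambda B_j^{(i)})\le C_0\Vert Du\Vert(\Omega)$. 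The inequality $\Vert Dv_i\Vert(\Omega)\le \int_\Omega g_{v_i}\,d\mu$ is immediate from the definition of the total variation applied to the sequence $v_i,v_i,v_i,\ldots$, since $v_i\in\liploc(\Omega)$ and $g_{v_i}$ is an upper gradient.

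Third, and this is the only step that uses the dichotomy assumption, I would verify that the zero extension $w_i$ belongs to $\BV(X)$ with $\Vert Dw_i\Vert(\partial\Omega)=0$. In $\Omega$ we have $w_i=v_i-u\in\BV(\Omega)$, and Proposition~\ref{prop:traces for discrete convolutions} gives
\[
\lim_{r\to 0^+}\frac{1}{\mu(B(x,r))}\int_{B(x,r)\cap\Omega}|w_i|\,d\mu=\lim_{r\to 0^+}\frac{1}{\mu(B(x,r))}\int_{B(x,r)\cap\Omega}|v_i-u|\,d\mu=0
\]
for $\mathcal H$-a.e.\ $x\in\partial\Omega$; that is, $w_i|_\Omega\in\BV_0(\Omega)$ in the sense of Definition~\ref{def:zero-bdy}. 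Invoking Theorem~\ref{thm:zero extension} — whose hypotheses are precisely the strong relative isoperimetric inequality or $\mathcal H(\partial\Omega)<\infty$ — the zero extension of $w_i|_\Omega$, which coincides with $w_i$ by construction, lies in $\BV(X)$ with $\Vert Dw_i\Vert(X\setminus\Omega)=0$, and in particular $\Vert Dw_i\Vert(\partial\Omega)=0$.

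The only mildly technical point is the bounded-overlap computation for the upper gradient bound; everything else is a direct citation of earlier results. No genuine obstacle is foreseen.
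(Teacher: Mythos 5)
Your proposal is correct and is essentially identical to the paper's argument: the paper gives no separate proof of this corollary but simply states that it follows by combining \eqref{eq:L1 convergence for discrete convolutions}, the upper gradient \eqref{eq:upper gradient of discrete convolution} (whose integral is controlled by the bounded overlap of the balls $5\lambda B_j$), Proposition~\ref{prop:traces for discrete convolutions}, and Theorem~\ref{thm:zero extension}, exactly as you do.
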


Because of the lower semicontinuity of the BV-energy norm, it follows from the above corollary that
\[
  \Vert Du\Vert(\Omega)\le \liminf_{i\to\infty}\Vert Dv_i\Vert(\Omega)\le C \Vert Du\Vert(\Omega).
\]
In the Euclidean setting (where the measure is the homogeneous Lebesgue measure), one has recourse to a better
smooth approximation than the discrete convolution presented in this section, namely the smooth convolution 
approximation via smooth compactly supported non-negative radially symmetric convolution kernels; with such a convolution
one has equality in the first inequality above. However, in the metric setting, equality is not guaranteed,
see for example~\cite{HKLL}. 
In order to obtain equality in the above, we need to modify the approximations $v_i$ away from $\partial\Omega$. 
This is the point of the next corollary.

\begin{corollary}\label{cor:gluing discrete convolutions 2}
Let $\Omega\subset X$ be an open set, and let $u\in\BV(\Omega)$. Assume either that the space supports a strong 
relative isoperimetric inequality, or that $\mathcal H(\partial \Omega)<\infty$. Then there exist functions
$\breve{v}_i\in \liploc(\Omega)$, $i\in\N$, with $\breve{v}_i\to u$ in $L^1(\Omega)$, 
$\int_\Omega g_{\breve{v}_i}\, d\mu\to \Vert Du\Vert(\Omega)$ for a choice of upper gradients
$g_{\breve{v}_i}$ of $\breve{v}_i$, and such that the functions
\[
\breve{w}_i:=
\begin{cases}
\breve{v}_i-u &\ \text{in }\Omega,\\
0  &\ \text{in }X\setminus\Omega,
\end{cases}
\]
satisfy $\breve{w}_i\in \BV_0(\Omega)$ with $\Vert D\breve{w}_i\Vert(\partial\Omega)=0$.
\end{corollary}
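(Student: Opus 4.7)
The plan is to paste together, via a Lipschitz cutoff, an energy-optimal locally Lipschitz approximation of $u$ in the interior of $\Omega$ and the discrete convolution $v_i$ supplied by Corollary~\ref{cor:gluing discrete convolutions} near $\partial\Omega$: the former hits the correct total variation but carries no a priori boundary information, while the latter has the right boundary behavior but only yields an upper gradient bound up to a multiplicative constant. By the definition of $\|Du\|(\Omega)$, fix $\phi_i\in\liploc(\Omega)$ with $\phi_i\to u$ in $\lloc(\Omega)$ and upper gradients $g_{\phi_i}$ satisfying $\int_\Omega g_{\phi_i}\,d\mu\to\|Du\|(\Omega)$; after truncation (using $u\in L^1(\Omega)$) we may assume $\phi_i\to u$ in $L^1(\Omega)$. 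Let $v_i$ and $w_i=(v_i-u)\chi_\Omega$ be as in Corollary~\ref{cor:gluing discrete convolutions}. For each $i$ pick $\delta_i>0$ (to be specified) and a Lipschitz function $\eta_i:\Omega\to[0,1]$ with $\eta_i\equiv 1$ on $\{\dist(\cdot,\partial\Omega)\ge 2\delta_i\}$, $\eta_i\equiv 0$ on $\{\dist(\cdot,\partial\Omega)\le\delta_i\}$, and local Lipschitz constant at most $1/\delta_i$. Set
\[
\breve{v}_i:=\eta_i\phi_i+(1-\eta_i)v_i\in\liploc(\Omega).
\]
Since $\|\breve{v}_i-u\|_{L^1(\Omega)}\le\|\phi_i-u\|_{L^1(\Omega)}+\|v_i-u\|_{L^1(\Omega)}$, the convergence $\breve{v}_i\to u$ in $L^1(\Omega)$ is immediate.

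A chain-rule computation along Lipschitz curves shows that
\[
g_{\breve{v}_i}:=\eta_i g_{\phi_i}+(1-\eta_i)g_{v_i}+g_{\eta_i}|\phi_i-v_i|
\]
is an upper gradient of $\breve{v}_i$, whence
\[
\int_\Omega g_{\breve{v}_i}\,d\mu\le\int_\Omega g_{\phi_i}\,d\mu+\int_{\{\dist(\cdot,\partial\Omega)<2\delta_i\}}g_{v_i}\,d\mu+\frac{1}{\delta_i}\|\phi_i-v_i\|_{L^1(\Omega)}.
\]
The first summand tends to $\|Du\|(\Omega)$. For the second, the formula~\eqref{eq:upper gradient of discrete convolution} for $g_{v_i}$ together with the Whitney property~\eqref{eq:ball-dist-radius} forces every Whitney ball $B_j$ meeting $\{\dist<2\delta_i\}$ to satisfy $r_j<\delta_i$ and $5\lambda B_j\subset\{\dist<C\delta_i\}$, so by the bounded overlap of the dilated balls,
\[
\int_{\{\dist(\cdot,\partial\Omega)<2\delta_i\}}g_{v_i}\,d\mu\le C\|Du\|\bigl(\{\dist(\cdot,\partial\Omega)<C'\delta_i\}\bigr),
\]
which tends to zero as $\delta_i\to 0$ since $\|Du\|$ is a finite Radon measure on $\Omega$. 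Choosing e.g.\ $\delta_i:=\max\bigl(1/i,\sqrt{\|\phi_i-v_i\|_{L^1(\Omega)}}\bigr)$ then also kills the third summand, and combined with the lower semicontinuity bound $\|Du\|(\Omega)\le\liminf_i\int_\Omega g_{\breve{v}_i}\,d\mu$ we conclude $\int_\Omega g_{\breve{v}_i}\,d\mu\to\|Du\|(\Omega)$.

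For the boundary behavior, on the collar $\{\dist(\cdot,\partial\Omega)<\delta_i\}$ we have $\eta_i\equiv 0$, hence $\breve{v}_i=v_i$ and $\breve{w}_i=w_i$ there. Since $w_i\in\BV_0(\Omega)$, for $\mathcal H$-a.e.~$x\in\partial\Omega$ the weak zero trace condition~\eqref{eq:weak zero trace condition} holds for $w_i$; as soon as $r$ is small enough that $B(x,r)\cap\Omega\subset\{\dist(\cdot,\partial\Omega)<\delta_i\}$, the same condition holds for $\breve{w}_i$, so $\breve{w}_i\in\BV_0(\Omega)$, and Theorem~\ref{thm:zero extension} (applied via whichever of the two standing hypotheses is in force) yields $\|D\breve{w}_i\|(\partial\Omega)=0$. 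The main obstacle in the plan is the calibration of $\delta_i$ in the upper gradient estimate: it must vanish quickly enough that both the annular mass $\|Du\|(\{\dist<C'\delta_i\})$ and the collar on which $\breve{v}_i=v_i$ shrink to $\partial\Omega$, yet slowly enough that the cross term $\delta_i^{-1}\|\phi_i-v_i\|_{L^1(\Omega)}$ still goes to zero.
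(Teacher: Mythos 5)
Your proposal is correct and follows essentially the same route as the paper: glue an energy-optimal locally Lipschitz approximation to the discrete convolution near $\partial\Omega$ via a Lipschitz cutoff, control the three terms of the Leibniz upper gradient (the collar term via \eqref{eq:upper gradient of discrete convolution} and bounded overlap, giving $C\Vert Du\Vert$ of a shrinking neighbourhood of $\partial\Omega$), and inherit the zero-trace property of $\breve{w}_i$ from that of $w_i$ since $\breve{v}_i=v_i$ near the boundary. The only difference is organizational — you calibrate $\delta_i$ explicitly in terms of $\Vert\phi_i-v_i\Vert_{L^1}$ rather than fixing $\delta$, letting $i\to\infty$, and then diagonalizing over $\eps\to 0$ as the paper does (which also localizes with a ball $B(x,1/\delta)$ to handle unbounded $\Omega$, a point your truncation remark glosses over but which is routine).
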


Note that $\Vert D\breve{v}_i\Vert(\Omega)\le \int_\Omega g_{\breve{v}_i}\, d\mu$, and hence as a consequence
we also have that  $\Vert D\breve{v}_i\Vert(\Omega)\to \Vert Du\Vert(\Omega)$.
When $\breve{v}_i\to u$ in $L^1(\Omega)$ and $\Vert D\breve{v}_i\Vert(\Omega)\to \Vert Du\Vert(\Omega)$, 
we say that the sequence of functions $\breve{v}_i$ converges \emph{strictly} to $u$ in $\BV(\Omega)$. 

\begin{proof} 
For every $\delta>0$, let 
\begin{equation}\label{eq:shrink-domain}
\Omega_{\delta}:=\{y\in\Omega:\,\dist(y,X\setminus\Omega)>\delta\}. 
\end{equation}
Fix $\eps>0$ and $x\in X$, and choose $\delta>0$ such that
\[
\Vert Du\Vert(\Omega\setminus(\Omega_{\delta}\cap B(x,1/\delta)))<\eps.
\]
We will use the function $\eta$ on $X$ given by
\[
\eta(y):=\max\left\{0,1-\frac 4\delta \dist(y,\Omega_{\delta/2}\cap B(x,2/\delta))\right\}
\]
to paste discrete convolutions to good local Lipschitz approximations of $u$ far away from 
$\bdy$. The function $\eta$ is $4/\delta$-Lipschitz.

Let each $v_i\in\liploc(\Omega)$ be a discrete convolution of $u$ in $\Omega$, at scale 
$1/i$. From the definition of the total variation we get a sequence of 
functions $u_i\in\liploc(\Omega)$ with $u_i\to u$ in $L_{\loc}^1(\Omega)$ such that
\[
\int_\Omega g_{u_i}\, d\mu \to\Vert Du\Vert(\Omega)
\]
for some choice of upper gradients $g_{u_i}$ of $u_i$. Now define
\[
\breve{v}_i:=\eta u_i+(1-\eta)v_i,
\]
so that $\breve{v}_i\to u$ in $L^1(\Omega)$, and by a Leibniz rule, see e.g.~\cite[Lemma~2.18]{BB}, 
each $\breve{v}_i$ has an upper gradient
\[
g_i:=g_{\eta}|u_i-v_i|+\eta g_{u_i}+(1-\eta)g_{v_i},
\]
where $g_{\eta}$ and $g_{v_i}$ are upper gradients of $\eta$ and $v_i$, respectively. 
Since we can take $g_{\eta}=0$ outside $\Omega_{\delta/4}\cap B(x,4/\delta)\Subset \Omega$, we have 
$g_{\eta}|u_i-v_i|\to 0$ in $L^1(\Omega)$, and by also  
using~\eqref{eq:upper gradient of discrete convolution}, we get
\begin{align*}
\limsup_{i\to\infty}\int_{\Omega}g_i\,d\mu
&\le \limsup_{i\to\infty}\int_{\Omega}g_{u_i}\,d\mu
    +\limsup_{i\to\infty}\int_{\Omega\setminus (\Omega_{\delta/2}\cap B(x,2/\delta))}g_{v_i}\,d\mu\\
&\le \Vert Du\Vert(\Omega)+C\Vert Du\Vert(\Omega\setminus(\Omega_{\delta}\cap B(x,1/\delta)))\\
&\le \Vert Du\Vert(\Omega)+C\eps.
\end{align*}
The facts that $\breve{w}_i\in \BV(X)$ and $\Vert D\breve{w}_i\Vert(\partial\Omega)=0$ for each $i\in\N$ again follow 
by combining Theorem~\ref{thm:zero extension} and Proposition~\ref{prop:traces for discrete convolutions}. 
By a diagonalization argument, where we also let $\varepsilon\to 0$ (and hence $\delta\to 0$), we complete the proof.
\end{proof}

We can relax the requirement that $X$ supports a strong relative isoperimetric inequality or that 
$\mathcal{H}(\partial\Omega)<\infty$ and still obtain a useful, but slightly weaker, approximation as follows.

\begin{corollary}\label{cor:weak-approx-ver1}
Let $\Omega\subset X$ be an 
open set. Then there is a sequence of open sets $\Omega_j$, $j\in\N$,
with $\Omega_j\Subset \Omega_{j+1}$ for each $j\in\N$ and $\Omega=\bigcup_j\Omega_j$, such that whenever
$u\in \BV(\Omega)$, there is a sequence of functions $u_j\in \BV(\Omega_j)$ such that
\begin{enumerate}
\item $u_j\in\liploc(\Omega_j)$ with 
\[
  \int_{\Omega_j}g_{u_j}\, d\mu\le \Vert Du\Vert(\Omega_j)+2^{-j}
\]
for some choice of upper gradient $g_{u_j}$ of $u_j$,
\item $\Vert u_j-u\Vert_{L^1(\Omega_j)}\le 2^{-j}$,
\item with $\widehat{u_j-u}$ denoting the zero extension of $u_j-u$ to $X\setminus\Omega_j$, 
\[
 \Vert D(\widehat{u_j-u})\Vert(\partial\Omega_j)=0,
\]
\item $\mathcal{H}(\partial\Omega_j)$ is finite for each $j\in\N$.
\end{enumerate}
\end{corollary}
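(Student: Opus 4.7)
The strategy is to exhaust $\Omega$ by open sets $\Omega_j \Subset \Omega_{j+1}$ with $\bigcup_j \Omega_j = \Omega$ and $\mathcal{H}(\partial \Omega_j) < \infty$ for each $j$, and then invoke Corollary \ref{cor:gluing discrete convolutions 2} on each $\Omega_j$ applied to $u|_{\Omega_j} \in \BV(\Omega_j)$. Properties (1)--(3) together with the vanishing-boundary-variation part of (4) then follow by a diagonal selection in the approximation index; the heart of the proof is the construction of the $\Omega_j$.

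Fix a base point $x_0 \in \Omega$ and consider the $1$-Lipschitz function $d_\Omega(x) := \dist(x, X \setminus \Omega)$. I would set
\[
\Omega_j := \{x \in X :\, d_\Omega(x) > t_j\} \cap B(x_0, R_j),
\]
for sequences $t_j \searrow 0$ and $R_j \nearrow \infty$ to be chosen. For any such choice, $\overline{\Omega_j} \subseteq \{d_\Omega \ge t_j\} \cap \overline{B(x_0, R_j)}$, which is compact (by properness of $X$) and contained in $\Omega$; requiring $t_{j+1} < t_j$ and $R_{j+1} > R_j$ strictly yields $\Omega_j \Subset \Omega_{j+1}$, and $\bigcup_j \Omega_j = \Omega$ is immediate. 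Moreover,
\[
\partial \Omega_j \subseteq \{d_\Omega = t_j\} \cup S(x_0, R_j),
\]
so property (4) will hold as soon as both level sets on the right have finite $\mathcal{H}$-measure.

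The main obstacle is arranging this last condition. Applying the coarea formula \eqref{eq:coarea} to $d_\Omega$ and $d(\cdot, x_0)$, each truncated and multiplied by a Lipschitz cutoff supported in a large ball so as to lie in $\BV(X)$, yields $P(\{d_\Omega > t\}, X) < \infty$ and $P(B(x_0, R), X) < \infty$ for almost every $t$ and $R$; via \eqref{eq:def of theta} this already gives $\mathcal{H}$-finiteness of the corresponding measure-theoretic boundaries. To pass to the topological level sets $\{d_\Omega = t_j\}$ and $S(x_0, R_j)$, I would combine the perimeter estimates with a ball-covering argument in the spirit of \cite[Lemma~6.2]{KKST}: around each point of the level set, select a ball whose dyadic radius realises the estimate $P(B, X) \le C_d \mu(B)/r$, then apply the $5$-covering theorem to produce a countable cover with summable $\mu(B)/r$, and integrate in $t$ (resp.\ $R$) against the global BV bound on the truncated Lipschitz function to conclude that the ``bad'' set of levels has zero Lebesgue measure. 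Choosing $t_j$ and $R_j$ in the common good set completes the construction.

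Once the $\Omega_j$'s have been built, Corollary \ref{cor:gluing discrete convolutions 2} applied to $u|_{\Omega_j}$ on $\Omega_j$ produces a sequence $\breve v_{j, k} \in \liploc(\Omega_j)$ with $\breve v_{j, k} \to u|_{\Omega_j}$ in $L^1(\Omega_j)$, with upper gradients $g_{\breve v_{j, k}}$ satisfying $\int_{\Omega_j} g_{\breve v_{j, k}}\, d\mu \to \|Du\|(\Omega_j)$, and with the zero extension of $\breve v_{j, k} - u|_{\Omega_j}$ to $X \setminus \Omega_j$ having zero total variation on $\partial \Omega_j$. A diagonal choice $k = k(j)$ large enough to force $\|\breve v_{j, k(j)} - u\|_{L^1(\Omega_j)} \le 2^{-j}$ and $\int_{\Omega_j} g_{\breve v_{j, k(j)}}\, d\mu \le \|Du\|(\Omega_j) + 2^{-j}$ then gives $u_j := \breve v_{j, k(j)}$, satisfying all four claims.
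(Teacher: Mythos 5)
Your overall strategy coincides with the paper's: exhaust $\Omega$ by super-level sets of the distance function $d_\Omega$ (intersected with balls in the unbounded case), pick levels at which the boundary has finite $\mathcal{H}$-measure, and then feed each $\Omega_j$ into Corollary~\ref{cor:gluing discrete convolutions 2} with a diagonal choice of the approximation index. The final step is fine. The gap is in how you establish property (4), which is the only nontrivial point of the construction.

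Your route is: coarea formula~\eqref{eq:coarea} applied to (a truncation of) $d_\Omega$ gives $P(\{d_\Omega>t\},X)<\infty$ for a.e.\ $t$, and then~\eqref{eq:def of theta} gives $\mathcal H(\partial^*\{d_\Omega>t\})<\infty$. But $\partial\Omega_j$ is contained in the \emph{topological} level set $\{d_\Omega=t_j\}$ (together with a sphere), and finiteness of the perimeter only controls the \emph{measure-theoretic} boundary $\partial^*\{d_\Omega>t\}$, which can be strictly and badly smaller than $\partial\{d_\Omega>t\}\subset\{d_\Omega=t\}$: a set of finite perimeter in $X$ can have topological boundary of infinite, even non-$\sigma$-finite, codimension-one Hausdorff measure (this distinction is exactly the point of the ``strong relative isoperimetric inequality'' discussion in Section~2, and of the snowflake examples in Sections~4--5). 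The covering argument you sketch ``in the spirit of \cite[Lemma~6.2]{KKST}'' does not bridge this gap: that lemma produces radii at which a \emph{ball} has controlled perimeter, and selecting such balls around points of $\{d_\Omega=t\}$ gives no bound on $\sum_i\mu(B_i)/r_i$ in terms of $P(\{d_\Omega>t\},X)$, precisely because points of the level set need not carry any perimeter. What is actually needed is the coarea \emph{inequality} for Lipschitz functions, $\int^*\mathcal H(\{f=t\}\cap A)\,dt\le C\int_A g_f\,d\mu$, which is proved by covering $A$ with small balls and charging each ball to the interval of levels $f(B_i)$ via the oscillation of $f$ on $B_i$ — an argument that works with the full level sets from the start and never passes through perimeters. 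This is the result the paper invokes from \cite{BH}, \cite[Proposition~3.1.5]{AT}, or \cite[Inequality~(2.6)]{MMS15}; without it (or a proof of it), your construction of the $\Omega_j$ does not yield $\mathcal H(\partial\Omega_j)<\infty$.
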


\begin{proof}
For unbounded sets $\Omega$ we can make modifications to the following argument along the lines of the proof of 
Corollary~\ref{cor:gluing discrete convolutions 2}, so we assume now
that $\Omega$ is bounded.

For each $\delta>0$ let $\Omega_\delta$ be as in~\eqref{eq:shrink-domain}.
By the results of~\cite{BH} or~\cite[Proposition 3.1.5]{AT} (see also~\cite[Inequality~(2.6)]{MMS15}), 
$\mathcal{H}(\partial\Omega_\delta)<\infty$ for almost every $\delta>0$. 
For each $j\in\N$ we choose one such $\delta_j\in [2^{-j-1}, 2^{-j}]$.

Since $\mathcal{H}(\partial\Omega_j)$ is finite, we can apply Corollary~\ref{cor:gluing discrete convolutions 2}
to $\Omega$ in order to obtain a function $u_j$ that satisfies the requirements laid out in the statement of the
present corollary. This completes the proof of Corollary~\ref{cor:weak-approx-ver1}.
\end{proof}

In what follows, $\BV_c(\Omega)$ is the collection of all functions $u\in\BV(X)$ with $\text{supt}(u)\Subset\Omega$.
Recall the definition of $\BV_0(\Omega)$ given in Definition~\ref{def:zero-bdy} above.
The class $N^{1,p}_0(\Omega)$ has $N^{1,p}_c(\Omega)$ as a dense subclass whenever $1\le p<\infty$, see
for example~\cite[Theorem~5.45]{BB}.
The following analogous approximation theorem for $\BV_0(\Omega)$ is an application of
Proposition~\ref{prop:traces for discrete convolutions}. Such an approximation is a useful tool in the
study of Dirichlet problems associated with BV functions. The theorem clearly
fails even for smooth bounded Euclidean domains if the notion of $\BV_0(\Omega)$ as given in~\cite{HKL} or~\cite{HKLL}
is considered.

\begin{theorem}\label{thm:density of compactly supported functions revisited-VerI}
Let $\Omega$ be an open set, and suppose that either the space supports a strong relative isoperimetric inequality or 
$\mathcal H(\partial\Omega)<\infty$. Suppose that $u\in \BV_0(\Om)$
(in particular, it is enough to have $u\in \BV(\Omega)$ with $Tu(x)=0$
for $\mathcal H$-a.e. $x\in\partial\Omega$).
Then there exists a sequence $u_k\in \BV_c(\Om)$, $k\in\N$, such that 
$u_k\to u$ in $\BV(\Om)$. 
\end{theorem}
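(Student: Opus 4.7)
The plan is to pass to the zero extension $\widehat u\in\BV(X)$ via Theorem~\ref{thm:zero extension}, combine it with the Lipschitz approximation from Corollary~\ref{cor:gluing discrete convolutions 2}, and then truncate those Lipschitz approximants by distance-based cutoffs concentrated strictly inside $\Omega$, using the zero-trace hypothesis to absorb the extra variation introduced by the cutoffs.

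First, by Theorem~\ref{thm:zero extension} the zero extension $\widehat u$ is in $\BV(X)$ with $\Vert D\widehat u\Vert(X\setminus\Omega)=0$, so the variation of $\widehat u$ is concentrated inside $\Omega$ and places no mass on $\partial\Omega$. Apply Corollary~\ref{cor:gluing discrete convolutions 2} to obtain $\breve v_j\in\liploc(\Omega)$ with $\breve v_j\to u$ in $L^1(\Omega)$ and $\int_\Omega g_{\breve v_j}\,d\mu\to\Vert Du\Vert(\Omega)$; since the zero extension of $\breve v_j-u$ lies in $\BV_0(\Omega)$ by that corollary, and $u\in\BV_0(\Omega)$ by assumption, the triangle inequality shows that each $\breve v_j$ itself satisfies the weak zero-trace condition~\eqref{eq:weak zero trace condition} at $\mathcal H$-a.e.\ point of $\partial\Omega$. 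Next, for a sequence $\delta_j\downarrow 0$ to be chosen adaptively, let $\Omega_\delta:=\{x\in\Omega:\dist(x,X\setminus\Omega)>\delta\}$ and pick Lipschitz cutoffs $\eta_j:X\to[0,1]$ with $\eta_j\equiv 1$ on $\Omega_{2\delta_j}$, $\supp\eta_j\subset\Omega_{\delta_j}$, and $\Lip(\eta_j)\le 2/\delta_j$. Define $u_j:=\eta_j\breve v_j$, multiplied by a compactly supported cutoff of $X$ in case $\Omega$ is unbounded. Each $u_j$ is locally Lipschitz with compact support in $\Omega$, so $u_j\in\BV_c(\Omega)$, and $u_j\to u$ in $L^1(\Omega)$ by dominated convergence together with $\breve v_j\to u$.

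For the BV convergence, the Leibniz product rule for upper gradients gives $g_{u_j}\le\eta_j g_{\breve v_j}+|\breve v_j|g_{\eta_j}$, so
\[
\Vert Du_j\Vert(\Omega)\le\int_\Omega\eta_j g_{\breve v_j}\,d\mu+\int_{\Omega_{\delta_j}\setminus\Omega_{2\delta_j}}|\breve v_j|\,g_{\eta_j}\,d\mu.
\]
The first term is dominated by $\int_\Omega g_{\breve v_j}\,d\mu$, which converges to $\Vert Du\Vert(\Omega)$ by construction. The main obstacle is the second term, which scales like $\delta_j^{-1}\int_{\Omega\setminus\Omega_{2\delta_j}}|\breve v_j|\,d\mu$. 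To force it to zero, apply Lemma~\ref{lem:measure in domain} to the finite Radon measure $\nu_j:=|\breve v_j|\mu$ on $\Omega$: at $\mathcal H$-a.e.\ $x\in\partial\Omega$ one has $r\,\nu_j(B(x,r)\cap\Omega)/\mu(B(x,r))\to 0$ as $r\to 0$. Combining this pointwise vanishing with a $5r$-covering of $\partial\Omega$ at scale $\delta_j$, the doubling of $\mu$, and the hypothesis $\mathcal H(\partial\Omega)<\infty$ (or the strong relative isoperimetric inequality, used through Theorem~\ref{thm:zero extension}), an Egorov-type uniformity argument followed by a diagonal extraction of $\delta_j$ yields the required vanishing of the second integral. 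Lower semicontinuity of the total variation then provides the matching $\liminf$ bound, so $\Vert Du_j\Vert(\Omega)\to\Vert Du\Vert(\Omega)$ and $u_j\to u$ in $\BV(\Omega)$.
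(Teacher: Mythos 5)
Your architecture differs from the paper's in a crucial way, and the difference is where the gap lies. Everything hinges on the boundary-layer term
\[
\frac{1}{\delta_j}\int_{\Omega_{\delta_j}\setminus\Omega_{2\delta_j}}|\breve v_j|\,d\mu,
\]
and the tools you invoke do not make it vanish. The zero-trace information available (for $u$, for $\breve v_j$, and via Lemma~\ref{lem:measure in domain}) is purely pointwise: at $\mathcal H$-a.e.\ $x\in\partial\Omega$ the averages $\vint{B(x,r)\cap\Omega}|\breve v_j|\,d\mu$ tend to $0$. To convert this into smallness of $\delta^{-1}\int_{\Omega\setminus\Omega_{2\delta}}|\breve v_j|\,d\mu$ you must cover the collar by balls $B(x_i,C\delta)$ centered on $\partial\Omega$ and sum $\varepsilon\cdot\mu(B(x_i,C\delta))/\delta$. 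The quantity $\sum_i\mu(B(x_i,C\delta))/\delta$ is a Minkowski-content-type sum that is \emph{not} controlled by $\mathcal H(\partial\Omega)$: since $\mathcal H_R$ is an infimum over covers, a particular (even pairwise disjoint) family of $\delta$-balls centered on $\partial\Omega$ can have $\sum_i\mu(B_i)/r_i$ far larger than $\mathcal H(\partial\Omega)$ unless one assumes a lower codimension-one density bound for $\partial\Omega$ (a converse to~\eqref{eq:codimension boundary condition}), which the theorem does not. An Egorov step does not repair this: the exceptional boundary set, small in $\mathcal H$-measure, can still carry an uncontrolled share of the collar's $\mu$-measure. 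So the convergence $\Vert Du_j\Vert(\Omega)\to\Vert Du\Vert(\Omega)$ is not established.

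The paper avoids this estimate entirely by gluing in the opposite direction and outsourcing compact support to Newtonian space theory. It sets $w_{\delta,i}=\eta_\delta u+(1-\eta_\delta)v_i$, so the error $(1-\eta_\delta)(v_i-u)$ is controlled by the Poincar\'e-based bounds for discrete convolutions rather than by a $\delta^{-1}$-weighted boundary integral of the function itself (after first truncating $u$ to a bounded function, since the BV Leibniz rule requires boundedness). Compact support is then obtained not by a cutoff but by noting that each discrete convolution $v_i$ belongs to $N^{1,1}_0(\Omega)$, via the characterization of Newtonian functions with zero boundary values together with~\eqref{eq:zero-Sobolev}, and invoking the density of compactly supported Lipschitz functions in $N^{1,1}_0(\Omega)$ to replace $v_i$ by $\breve v_i\in\Lip_c(\Omega)$. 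If you wish to salvage your route, you essentially need that density theorem, or else an additional lower regularity assumption on $\partial\Omega$; as written, the key step fails.
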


\begin{proof}
First note that functions in $\BV_0(\Omega)$ can be approximated in the $\BV$ norm by bounded functions. To see this,
for each positive integer $n$ set
\[
u_n=\max\{-n,\min\{u,n\}\}.
\] 
Then by the coarea formula and by 
$|u_n|\le |u|$, we see that $u_n\in\BV_0(\Omega)$ with $\lim_nu_n=u$ in $L^1(\Omega)$. For
$t>0$, we see that $u(x)-u_n(x)>t$ if and only if $u(x)>t+n$, and for $t<0$, we have $u(x)-u_n(x)>t$ if and only if
$u(x)>t-n$. Therefore by the coarea formula,
\begin{align*}
 \Vert D(u-u_n)\Vert(\Omega)&=\int_{-\infty}^\infty P(\{u-u_n>t\},\Omega)\, dt\\
        &=\int_{-\infty}^0P(\{u>t-n\},\Omega)\, dt+\int_0^\infty P(\{u>t+n\},\Omega)\, dt\\
        &=\int_{(-\infty,-n)\cup(n,\infty)} P(\{u>t\},\Omega)\, dt\to 0\text{ as }n\to\infty.
\end{align*}
Therefore $u_n\to u$ in $\BV(\Omega)$ as $n\to\infty$. Hence without loss of generality we will assume for the rest
of the proof that $u\in\BV_0(\Omega)$ is bounded.

Fix $\eps>0$. Take a sequence of discrete convolutions $v_i$ of $u$, 
each with respect to a Whitney type covering 
of $\Omega$ at scale $1/i$. By~\eqref{eq:L1 convergence for discrete convolutions} we 
know that $v_i\to u$ in $L^1(\Omega)$ as $i\to\infty$. By 
Proposition~\ref{prop:traces for discrete convolutions} and the fact that $u\in\BV_0(\Omega)$, for each $i\in\N$ we also have
\begin{equation}\label{eq:zero-Sobolev}
\lim_{r\to 0^+}\frac{1}{\mu(B(x,r))}\int_{B(x,r)\cap\Omega}|v_i|\,d\mu=0
\end{equation}
for $\mathcal H$-a.e. $x\in\partial\Omega$.

Let $\Omega_\delta$ be as in~\eqref{eq:shrink-domain} for $\delta>0$, and as in the proof of
Corollary~\ref{cor:gluing discrete convolutions 2}, 
let $\eta_{\delta}\in \Lip_c(\Omega)$ be a $1/\delta$-Lipschitz function such that $0\le \eta_\delta\le 1$ on $X$,
with $\eta_{\delta}=1$ in $\Omega_{2\delta}$ and $\eta_{\delta}=0$ outside 
$\Omega_{\delta}$. Then define
\[
w_{\delta, i}:=\eta_{\delta} u+(1-\eta_{\delta})v_i.
\]
It is clear that $w_{\delta, i}\to u$ in $L^1(\Omega)$ as $i\to\infty$, and by using first the Leibniz rule for BV functions
obtained in~\cite[Corollary~4.2]{KKST2} (unlike for Newtonian functions, using the Leibniz rule for
$\BV$ functions requires that we have bounded functions), and 
then~\eqref{eq:upper gradient of discrete convolution}, we get for large enough $i$
\begin{align*}
&\Vert D(w_{\delta, i}-u)\Vert(\Omega)=\Vert D\big((1-\eta_{\delta})(v_i-u)\big)\Vert(\Omega)\\
&\quad\le C\int_{\Omega_{\delta}\setminus\Omega_{2\delta}}g_{\eta_{\delta}}|v_i-u|\,d\mu
+C\Vert Du\Vert(\Omega\setminus\Omega_{2\delta})
+C\Vert Dv_i\Vert(\Omega\setminus\Omega_{2\delta})\\
&\quad\le\frac C\delta \int_{\Omega_{\delta}\setminus\Omega_{2\delta}}|v_i-u|\,d\mu
+C\Vert Du\Vert(\Omega\setminus\Omega_{3\delta}).
\end{align*}
We can make the second term smaller than $\eps/3$ by taking $\delta$ small enough, and then we can simultaneously
ensure that the 
first term is  smaller than $\eps/3$ and that $\Vert w_{\delta,i}-u\Vert_{L^1(\Omega)}$ is smaller than $\eps/3$
by taking $i$ large enough. We fix the value of $\delta$ so obtained, and then for such large 
enough $i\in\N$, we have
\begin{equation}\label{eq:approximation with half Newtonian function}
\Vert w_{\delta, i}-u\Vert_{\BV(\Omega)}< \eps.
\end{equation}
By~\cite[Theorem~1.1]{KKST} 
and~\eqref{eq:zero-Sobolev}, 
we know that each $v_i$ is in the 
space $N^{1,1}_0(\Omega)$, i.e. the space of Newtonian functions with zero boundary values. 
By~\cite[Theorem~5.45]{BB} we then have for each $i\in\N$ a Lipschitz function 
$\breve{v}_i$, with compact support contained in $\Omega$, such that $\Vert \breve{v}_i-v_i\Vert_{N^{1,1}(\Omega)}<1/i$, 
whence $\Vert \breve{v}_i-v_i\Vert_{\BV(\Omega)}<1/i$. Then we can define for $\delta>0$ and $i\in\N$,
\[
\breve{w}_{\delta, i}:=\eta_{\delta} u+(1-\eta_{\delta})\breve{v}_i.
\]
Clearly $\Vert \breve{w}_{\delta, i}- w_{\delta, i}\Vert_{L^1(\Omega)}\to 0$
as $i\to\infty$, and by the Leibniz rule again,
\begin{align*}
\Vert D(\breve{w}_{\delta, i}-w_{\delta, i})\Vert(\Omega)
&= \Vert D\big((1-\eta_{\delta})(\breve{v}_i-v_i)\big)\Vert(\Omega)\\
&\le C\Vert D(\breve{v}_i-v_i)\Vert(\Omega)+C\int_{\Omega}g_{\eta_{\delta}}|\breve{v}_i-v_i|\,d\mu\to 0
\end{align*}
as $i\to\infty$. By combining this with \eqref{eq:approximation with half Newtonian function}, 
we have for the $\delta>0$ determined earlier and a large enough $i\in\N$ that 
$\Vert \breve{w}_{\delta, i}-u\Vert_{\BV(\Omega)}<\eps$, and furthermore $\breve{w}_{\delta, i}\in \BV_c(\Omega)$.
\end{proof}

\section{Maz'ya-Sobolev inequalities}

In the direct method of the calculus of variations, to obtain regularity of minimizers one needs a Sobolev inequality,
namely, an estimate of the average value (on balls) of a power of a test function in terms of the average value of
its gradient. Such an inequality does not hold for functions in general, as demonstrated by non-zero constant functions. However,
if the test functions of interest vanish on a significantly large set, then such an estimate can be obtained. 
If the largeness of the set is in terms of the measure of the zero set, then such an estimate follows easily from
the Poincar\'e inequality: for $u\in\BV(X)$ and any ball $B=B(x,r)$, we have for $A:=\{y\in B:\,|u(y)|>0\}$,
\[
\left(\,\vint{B}|u|^{Q/(Q-1)}\,d\mu\right)^{(Q-1)/Q}\le Cr\left(1-\left(\frac{\mu(A)}{\mu(B)}\right)^{1/Q}\right)^{-1}
   \frac{\Vert Du\Vert(2\lambda B)}{\mu(2\lambda B)},
\]
where $Q>0$ is the lower mass bound exponent 
from~\eqref{eq:definition of Q}. We refer the interested reader to~\cite[Lemma~2.2]{KKLS} for  proof of this. 
However, in general this is too much to ask for. The more
natural way to measure largeness of the zero set of a function is in terms of the relative capacity of the set. Inequalities
based on such capacitary measures are originally due to Maz'ya~\cite{Maz1, Maz}. For functions with zero trace on the boundary
of $\Omega$, the zero set contains the boundary of $\Omega$, and hence one can see such an inequality as 
associated with $\BV_0(\Omega)$.
Thus we are motivated to study Maz'ya's inequalities for BV functions.


First we define two notions of BV-capacity as follows:

\begin{definition}\label{def:BV capacity}
For an arbitrary set $A\subset X$,
\[
\capa_{\BV}(A):=\inf\Vert u\Vert_{\BV(X)},
\]
where the infimum is taken over functions $u\in\BV(X)$ that satisfy $u\ge 1$ on a neighborhood of $A$.
If $A\subset B$ for a ball $B\subset X$, then the \emph{relative capacity} of $A$ with respect to $2B$ is given by
\[
 \rcapa_{\BV}(A,2B):=\inf\Vert Du\Vert(X)=\inf\Vert Du\Vert(\overline{2B}),
\]
where the infimum is taken over $u\in \BV(X)$ such that $u\ge 1$ on a neighborhood of $A$, with $u=0$ in $X\setminus 2B$.
\end{definition}

Note that if we just required $u\ge 1$ on $A$, we would have $\capa_{\BV}(A)=0$ whenever $\mu(A)=0$. 
For more on $\BV$-capacity, see \cite{HaKi} and \cite{HS}.

For $u\in\BV(X)$, recall the definition of $\widetilde{u}$ from~\eqref{eq:pointwise-rep}
as follows: 
\[
\widetilde{u}(x)=(u^{\wedge}(x)+u^{\vee}(x))/2,\quad x\in X.
\]

By~\cite[Corollary 4.7]{HS} we know that for any compact $K\subset B$,
\[
\rcapa_{\BV} (K,2B)\le C \inf\{\Vert Du\Vert(X):\, \widetilde{u}\ge 1\textrm{ on }K,\ u=0\textrm{ in }X\setminus 2B\}.
\]
Crucially, above we do not require that $\widetilde{u}\ge 1$ in a \emph{neighborhood} of $K$.
By slightly adapting the proof of this result, we obtain an analogous result for $\capa_{\BV}$: for any compact set $K\subset X$,
\begin{equation}\label{eq:capacity of compact sets}
\capa_{\BV}(K)\le C\inf\{\Vert u\Vert_{\BV(X)}:\,\widetilde{u}\ge 1\textrm{ on }K\}.
\end{equation}

By \cite[Corollary 3.8]{HaKi} we know that for any Borel set $A\subset X$,
\begin{equation}\label{eq:capacity and compact  subsets}
\capa_{\BV}(A)=\sup \{\capa_{\BV}(K),\ K\subset A,\ K\textrm{ compact}\}.
\end{equation}
By a direct modification of~\cite[Theorem 3.4]{HaKi} and~\cite[Corollary 3.8]{HaKi}, we 
obtain an analogous result for $\rcapa_{\BV}$: for any Borel set $A\subset B$,
\[
\rcapa_{\BV}(A,2B)= \sup \{\rcapa_{\BV}(K,2B),\ K\subset A,\ K\textrm{ compact}\}.
\]

Adapting the proof of~\cite[Theorem~5.53]{BB} (the result~\cite[Theorem~5.53]{BB} in the
Euclidean setting is originally due to Maz'ya~\cite{Maz1}, see also~\cite[Theorem~10.1.2]{Maz}), 
we now show the following.

\begin{theorem}\label{thm:mazya type inequality}
For $u\in\BV(X)$, let $S:= \{y\in X\setminus S_u:\, \widetilde{u}(y)=0\}$.
Then for every ball $B=B(x,r)\subset X$, we have
\[
\left(\,\vint{2B}|u|^{\tfrac{Q}{Q-1}}\,d\mu\right)^{\tfrac{Q-1}{Q}}\le C\frac{r+1}{\capa_{\BV}(S \cap B)}\Vert Du\Vert(2\lambda B),
\]
and
\[
\left(\,\vint{2B}|u|^{\tfrac{Q}{Q-1}}\,d\mu\right)^{\tfrac{Q-1}{Q}}\le \frac{C}{\text{\rm cap}_{\BV}(S \cap B, 2B)}\Vert Du\Vert(2\lambda B),
\]
with $C=C(C_d,C_P,\lambda)$.
\end{theorem}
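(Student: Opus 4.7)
The plan is to adapt the classical Maz'ya-Sobolev argument, as in \cite[Theorem~5.53]{BB}, to the BV setting, exploiting the relaxed capacity bound \eqref{eq:capacity of compact sets} and the inner regularity \eqref{eq:capacity and compact  subsets}. First I would upgrade the $(1,1)$-Poincar\'e inequality \eqref{eq:poincare for BV} to a Sobolev-Poincar\'e inequality for BV functions,
\[
\biggl(\,\vint{2B}|u-u_{2B}|^{Q/(Q-1)}\,d\mu\biggr)^{(Q-1)/Q}
\le Cr\,\frac{\Vert Du\Vert(2\lambda B)}{\mu(2\lambda B)},
\]
which is a standard truncation argument in doubling metric spaces. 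Applying the triangle inequality in $L^{Q/(Q-1)}(2B,d\mu/\mu(2B))$ then reduces both asserted inequalities to bounding $|u_{2B}|$ by the right-hand sides, modulo an absorption step for the Sobolev-Poincar\'e remainder.

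Replacing $u$ by $|u|$ (which does not increase $\Vert Du\Vert$ nor change $S$) and choosing a sign, I may assume $u\ge 0$ and $u_{2B}>0$, since otherwise there is nothing to prove. I would then introduce the test function
\[
v:=\max\!\Bigl(0,\,1-\frac{u}{u_{2B}}\Bigr),\qquad w:=\eta v,
\]
with $\eta$ a $C/r$-Lipschitz cutoff satisfying $0\le\eta\le 1$, $\eta\equiv 1$ on $B$, and $\supp(\eta)\subset 2B$. Because $t\mapsto\max(0,1-t/u_{2B})$ is Lipschitz and equals $1$ at $t=0$, at every $y\in S$ (where $u^{\wedge}(y)=u^{\vee}(y)=0$) we get $v^{\wedge}(y)=v^{\vee}(y)=1$; hence $\widetilde{w}(y)=1$ for all $y\in S\cap B$. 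A truncation bound gives $\Vert Dv\Vert(2B)\le\Vert Du\Vert(2B)/u_{2B}$, and the pointwise inequality $(u_{2B}-u)_{+}\le|u-u_{2B}|$ together with \eqref{eq:poincare for BV} yields the decisive $L^1$-estimate
\[
\Vert v\Vert_{L^1(2B)}\le\frac{1}{u_{2B}}\int_{2B}|u-u_{2B}|\,d\mu
\le C\,\frac{r\,\Vert Du\Vert(2\lambda B)}{u_{2B}}.
\]
A BV Leibniz estimate for $w=\eta v$ then gives $\Vert Dw\Vert(X)\le C\Vert Du\Vert(2\lambda B)/u_{2B}$ and $\Vert w\Vert_{L^1(X)}\le Cr\Vert Du\Vert(2\lambda B)/u_{2B}$, so $\Vert w\Vert_{\BV(X)}\le C(r+1)\Vert Du\Vert(2\lambda B)/u_{2B}$, with $w$ supported in $\overline{2B}$.

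For every compact $K\subset S\cap B$, \eqref{eq:capacity of compact sets} applied to $w$ now gives $\capa_{\BV}(K)\le C(r+1)\Vert Du\Vert(2\lambda B)/u_{2B}$, while its relative analogue (the adaptation of \cite[Corollary~4.7]{HS} stated just before \eqref{eq:capacity and compact  subsets}) gives $\rcapa_{\BV}(K,2B)\le C\Vert Du\Vert(2\lambda B)/u_{2B}$, since only $\Vert Dw\Vert$ enters. Passing to the supremum over compact $K$ via \eqref{eq:capacity and compact  subsets} and its relative analogue and rearranging yields
\[
u_{2B}\le \frac{C(r+1)\,\Vert Du\Vert(2\lambda B)}{\capa_{\BV}(S\cap B)}
\qquad\text{and}\qquad
u_{2B}\le \frac{C\,\Vert Du\Vert(2\lambda B)}{\rcapa_{\BV}(S\cap B,2B)}.
\]
The Sobolev-Poincar\'e remainder $Cr\Vert Du\Vert(2\lambda B)/\mu(2\lambda B)$ is then absorbed into each right-hand side using the trivial upper bounds $\capa_{\BV}(S\cap B)\le C(r+1)\mu(2B)/r$ and $\rcapa_{\BV}(S\cap B,2B)\le C\mu(2B)/r$, obtained by testing against the cutoff $\eta$ itself. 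The main technical obstacle is the verification of $\widetilde{w}\ge 1$ pointwise on $S\cap B$, which is what legitimizes invoking \eqref{eq:capacity of compact sets} rather than demanding $w\ge 1$ on an open neighborhood of $S\cap B$; this is precisely why the definition of $S$ excludes the jump set $S_u$, so that the Lipschitz truncation commutes with the approximate limits and $\widetilde{v}\equiv 1$ on $S$.
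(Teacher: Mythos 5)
Your proposal is correct and follows essentially the same route as the paper: both construct a test function of the form $\eta\bigl(1-u/(\text{normalizing average})\bigr)$, verify $\widetilde{v}=1$ on $S\cap B$, and combine the relaxed capacity bound \eqref{eq:capacity of compact sets}, inner regularity \eqref{eq:capacity and compact  subsets}, the BV Leibniz rule, and the Poincar\'e inequality. The only differences are cosmetic: the paper normalizes directly by $a=(\vint_{2B}|u|^{Q/(Q-1)}\,d\mu)^{(Q-1)/Q}$ so no final absorption step is needed (it instead estimates $|a-u_{2B}|$ via the same Sobolev--Poincar\'e inequality you invoke), while your positive-part truncation of $v$ neatly avoids the paper's preliminary reduction to bounded $u$.
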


\begin{proof}
Set $q:=\tfrac{Q}{Q-1}$.
First we assume that $u$ is bounded, $u\ge 0$,
and that $\Vert u\Vert_{L^1(2B)}>0$.

Let $\eta:X\to[0,1]$ be a $1/r$-Lipschitz function with $\eta=1$ in $B$ and 
$\eta=0$ in $X\setminus 2B$. We use the abbreviation
\[
a:=\left(\,\vint{2B}u^q\,d\mu\right)^{1/q},
\]
and define $v:=\eta(1-u/a)$. Now $v\in\BV(X)$ such that $v=0$ in $X\setminus 2B$ 
and $\widetilde{v}=1$ on $S\cap B$. Pick an arbitrary compact set $K\subset S\cap B$, so 
that in particular $\widetilde{v}=1$ on $K$. By~\eqref{eq:capacity and compact  subsets} and by using the 
Leibniz rule for BV functions, see~\cite[Corollary 4.2]{KKST2}, we get
\begin{equation}\label{eq:relative capacity estimate}
\begin{split}
&\frac 1C \capa_{\BV}(K)
 \le \int_X v\,d\mu+\Vert Dv\Vert(X) \\
&\qquad \le \frac{1}{a}\int_{2B}|u-a|\,d\mu+\frac{C}{a}\left(\Vert Du\Vert(2B)+\int_{2B}|u-a|g_{\eta}\,d\mu\right)\\
&\qquad \le \frac{1+Cr^{-1}}{a}\int_{2B}|u-a|\,d\mu+\frac{C}{a}\Vert Du\Vert(2B).
\end{split}
\end{equation}
To estimate the first term, we write
\begin{equation}\label{eq:relative capacity, using triangle inequality}
\int_{2B}|u-a|\,d\mu\le \int_{2B}|u-u_{2B}|\,d\mu+|u_{2B}-a|\,\mu(2B).
\end{equation}
Here the first term can be estimated using the $(1,1)$-Poincar\'e inequality, whereas for the second term we have
\begin{align*}
|a-u_{2B}|\,&\mu(2B)^{1/q}
=|\Vert u\Vert_{L^q(2B)}-\Vert u_{2B}\Vert_{L^q(2B)}|\\
&\qquad\qquad\le \Vert u-u_{2B}\Vert_{L^q(2B)}\\
&\qquad\qquad= \left(\,\vint{2B}|u-u_{2B}|^q\,d\mu\right)^{1/q}\mu(2B)^{1/q}\\
&\qquad\qquad= Cr\frac{\Vert Du\Vert (2\lambda B)}{\mu(2B)}\mu(2B)^{1/q}.
\end{align*}
Inserting this into \eqref{eq:relative capacity, using triangle inequality}, we get
\[
\int_{2B}|u-a|\,d\mu\le Cr\Vert Du\Vert (2\lambda B).
\]
Inserting this into \eqref{eq:relative capacity estimate}, we then get
\[
\capa_{\BV}(K)\le C\frac{r+1}{a}\Vert Du\Vert(2\lambda B).
\]
Recalling the definition of $a$ and using \eqref{eq:capacity and compact  subsets}, this implies
\[
\left(\,\vint{2B}|u|^q\,d\mu\right)^{1/q}\le C\frac{r+1}{\capa_{\BV}(S\cap B)}\Vert Du\Vert(2\lambda B).
\]
A modification of the above argument (by dropping the term $\int_X v\, d\mu$
from~\eqref{eq:relative capacity estimate}) yields the second inequality claimed in the theorem for bounded
non-negative functions. For bounded $\BV$ functions that are allowed to take negative values, we can replace $u$ with
$|u|$ to obtain the desired inequalities by noting that for any $A\subset X$,
\[
\Vert D|u|\Vert(A)\le \Vert Du\Vert(A).
\]
Finally, for $\BV$ functions that may also be unbounded, as at the beginning of the proof of 
Theorem~\ref{thm:density of compactly supported functions revisited-VerI} we approximate
$u$ by bounded functions $u_n$ in the $\BV(X)$-norm and note that 
\[
\lim_{n\to\infty}\int_{2B}|u_n|^q\, d\mu=\int_{2B}|u|^q\, d\mu
\]
and 
\[
\lim_{n\to\infty}\Vert Du_n\Vert(2\lambda B)=\Vert Du\Vert(2\lambda B).
\]
Thus we obtain the desired capacitary estimates for all $u\in\BV(X)$.
\end{proof}

%
%

\end{document}